\documentclass{article}
\usepackage[utf8]{inputenc}
\usepackage{amsmath,amssymb,amsfonts,amsthm,enumitem,xy,pstricks,pst-node,graphicx,comment}
\xyoption{all}

\title{Smooth Approximations of Quasispheres}
\author{Spencer Cattalani\thanks{Partially supported by NSF grant DMS 2246485 and the Simons Foundation}}
\date{\today}

\setcounter{topnumber}{1}

\addtolength{\textheight}{1.2in}
\addtolength{\topmargin}{-.6in}
\addtolength{\textwidth}{1.5in}
\addtolength{\oddsidemargin}{-.75in}
\addtolength{\evensidemargin}{-.75in}
\setlength{\parindent}{0cm} 

\numberwithin{equation}{section}
\newtheorem{thm}{Theorem}
\newtheorem{prp}[thm]{Proposition}
\newtheorem{lmm}[thm]{Lemma}   
\newtheorem{crl}[thm]{Corollary}

\theoremstyle{definition}

\theoremstyle{remark}
\newtheorem{rmk}[thm]{Remark}

\theoremstyle{remark}

\def\BE#1{\begin{equation}\label{#1}}
\def\EE{\end{equation}}
\def\e_ref#1{(\ref{#1})}

\def\wt#1{\widetilde{#1}}

\def\lra{\longrightarrow}

\def\R{\mathbb R}

\def\N{\mathbb N}

\def\st{\mathrm{St}}

\def\distortion{\lambda_\varepsilon}
\def\continuousmetric{d_\varepsilon}
\def\smoothmetric{\rho_\varepsilon}

\begin{document}

\maketitle

\begin{abstract}
We prove that every $n$-dimensional quasisphere is the Gromov-Hausdorff limit of a sequence of locally smooth uniform quasispheres, for every dimension $n$. We also prove an analogous result in the bi-Lipschitz setting. This extends recent results of D.~Ntalampekos from dimension 2 to arbitrary dimension. In the process, we replace the second half of his argument by a completely different, more efficient approach, which should be applicable to other problems.
\end{abstract}

\section{Introduction}
Uniformization is a classical theme of conformal geometry, going back to the uniformization of Riemann surfaces. In the context of metric spaces, \textit{quasisymmetric} parameterizations are often considered. This class of mappings has a rich history and connections to fields such as dynamics and geometric topology; we refer the reader to \cite{N25b} for an account of these developments. Special interest is often given to metric spaces which are quasisymmetrically parameterized by the sphere, so called \textit{quasispheres}.\\

A related topic, which has had applications to uniformization \cite{NR24,NR23}, is that of \textit{approximation}. Often, one is concerned with approximating a metric space in the Gromov-Hausdorff sense by Riemannian manifolds. This involves a delicate interplay; one would like to approximate by a nice class of spaces, but such classes are frequently closed under Gromov-Hausdorff convergence. For example, a geodesic metric on a smooth manifold can be approximated by Riemannian metrics on that same manifold \cite{FO95} and, in fact, every geodesic metric space can be approximated by Riemannian surfaces \cite{C92}. However, this would be impossible if one were to additionally require control over the topology of the surfaces \cite{D24}. A similar rigidity with respect to Ricci curvature is a celebrated result in differential geometry \cite{CC97}.\\

A fascinating recent development in this program is Ntalampekos's characterization of quasispheres in terms of smooth approximations \cite[Theorem~1.8]{N25}. In particular, he shows that every quasisphere is the Gromov-Hausdorff limit of a sequence of Riemannian quasispheres with uniform bounds. This result is notable for requiring no additional assumptions, such as Ahlfors regularity or finiteness of area. It therefore provides a general means of understanding quasispheres. Our main result is a generalization of this this theorem to arbitrary dimensions.

\begin{thm}\label{quasi_thm}
Let $(X,d)$ be a compact connected metric space. If $(X,d)$ is quasisymmetric to a Riemannian manifold $(X,d_g)$, then $(X,d)$ is the Gromov-Hausdorff limit of a sequence of metric spaces which are locally isometric to Riemannian manifolds and uniformly quasisymmetric to $(X,d_g)$.
\end{thm}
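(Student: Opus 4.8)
The plan is to work one scale at a time. Fix the $\eta$-quasisymmetric homeomorphism $f=\id\colon (X,d_g)\to (X,d)$ and a large $n\in\N$; I want to build a metric space $Y_n$ that is locally isometric to a Riemannian manifold, that is $\eta'$-quasisymmetric to $(X,d_g)$ for a single gauge $\eta'$ depending only on $\eta$ and $(X,g)$ (never on $n$), and with $d_{GH}\bigl(Y_n,(X,d)\bigr)\to 0$. The heuristic is that on a small $d_g$-ball $B_g(x,\de_n)$ the quasisymmetry condition pins down $d$ up to a single multiplicative constant $\mu_n(x)$ --- comparable to the local ``stretch factor'' of $f$ at $x$ at scale $\de_n$ --- so that $d$ restricted to $B_g(x,\de_n)$ is a bounded distortion of the round model; accordingly I will retain the coarse ($\gtrsim\de_n$) geometry of $d$ but smooth away everything below scale $\de_n$, patching the local pieces together through a partition of unity that interpolates the stretch factors across a fine triangulation.

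Concretely: first record the quantitative consequences of quasisymmetry. Since $(X,d_g)$ is a compact Riemannian manifold it is Ahlfors regular and linearly locally connected, and $\eta$-quasisymmetry then forces $(X,d)$ to be doubling, uniformly perfect, and LLC with data depending only on $\eta$ and $(X,g)$; moreover the ``stretch function'' $r\mapsto\sup_{d_g(x,y)\le r}d(x,y)$ becomes almost multiplicative in $r$, uniformly in $x$. Next, choose a smooth triangulation $T_n$ of $X$ whose simplices all have $d_g$-diameter $<\de_n$, for a sufficiently slowly decreasing sequence $\de_n\downarrow0$; mollify $x\mapsto\mu_n(x)$ to a smooth positive function $\la_n$ on $X$ (using the triangulation to fix a mollification scale $\ll\de_n$); and set $g_n:=\la_n^{2}\,g$, a smooth Riemannian metric on $X$. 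Let $Y_n$ be $X$ equipped with the length metric $\rho_n$ of $g_n$, possibly after a small modification aligning its coarse geometry with that of $d$. Then $Y_n$ is tautologically locally isometric to a Riemannian manifold, and $d_{GH}(Y_n,(X,d))\to 0$ follows from the almost-multiplicativity: at scales $\ge\de_n$ one has $\rho_n$ comparable to $d$ with multiplicative error tending to $1$, while the discrepancy, which occurs only at scales $<\de_n$, is of size $o(1)\cdot\operatorname{diam}$ and is absorbed into the Gromov--Hausdorff error.

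I expect the uniform quasisymmetry bound for $\id\colon(X,d_g)\to Y_n$ to be the main obstacle, and it is the step that must be carried out differently from the plane. Locally the model metric is quasisymmetric to the Euclidean model with gauge controlled by $\eta$, but a priori the constants could degenerate along the seams of $T_n$ and, worse, at the transition radius $r\sim\de_n$ where the geometry changes from ``$d$-like'' to ``smoothed.'' I plan to handle this with a single scale-uniform three-point estimate: bound $\rho_n(x,y)/\rho_n(x,z)$ in terms of $d_g(x,y)/d_g(x,z)$ treating the regimes $d_g(x,y),d_g(x,z)\lessgtr\de_n$ simultaneously, using the doubling and almost-multiplicativity from the first step so that the bound sees neither the scale, nor the seams, nor the dimension --- in contrast to the case-by-case planar argument being replaced. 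Once $\eta'$ is obtained, compactness of $(X,d)$ and of the spaces $Y_n$ makes the Gromov--Hausdorff convergence genuine, which finishes the proof; the bi-Lipschitz statement should come out of the same construction by replacing ``isometric'' with ``bi-Lipschitz'' throughout and tracking the (now uniformly bounded) distortion constants.
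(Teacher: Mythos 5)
Your central device --- conformally rescaling $g$ by a mollified scale-$\de_n$ stretch factor $\mu_n$ and taking the length metric of $\la_n^2 g$ --- is exactly the first half of the paper's construction (the function $\distortion(p)=\max_{q\in\overline B(p,\varepsilon)}d(p,q)/\varepsilon$ and the metric $h_\varepsilon=(4C\wt\distortion)^2g$). But there is a genuine gap in how you get from that rescaled Riemannian metric to a space that is Gromov--Hausdorff close to $(X,d)$. Your argument rests on the claim that the stretch function is ``almost multiplicative in $r$, uniformly in $x$,'' so that at scales $\geq\de_n$ the length metric $\rho_n$ of $\la_n^2g$ is comparable to $d$ with error tending to $1$. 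This is false for general quasisymmetries. Take $d=d_g^{\alpha}$ with $0<\alpha<1$ (the identity is $\eta$-quasisymmetric with $\eta(t)=t^{\alpha}$): then $\mu_n\equiv\de_n^{\alpha-1}$, the length metric of $\la_n^2g$ is $\de_n^{\alpha-1}d_g$, and its diameter blows up as $\de_n\to0$ while $(X,d)$ has bounded diameter. Conformal rescaling only controls the infinitesimal geometry; integrating the conformal factor along geodesics does not reproduce $d$ at coarse scales, because a quasisymmetric image of a length space need not be anything like a length space. The paper repairs exactly this by \emph{gluing}: it takes an $\varepsilon/2$-net $S_\varepsilon$, verifies $d\le\smoothmetric$ on $S_\varepsilon\times S_\varepsilon$, and replaces $\smoothmetric$ by the glued metric of Lemma~\ref{gluing_lmm} determined by $\smoothmetric$ and $d|_{S_\varepsilon\times S_\varepsilon}$, which shortcuts all macroscopic distances back to their $d$-values while remaining locally isometric to the Riemannian metric. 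Your phrase ``possibly after a small modification aligning its coarse geometry with that of $d$'' is pointing at this step, but it is not small and not optional; without specifying it the GH convergence fails.

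The same false almost-multiplicativity underlies your plan for the uniform quasisymmetry bound. The passage from ``$d_g$ is quasisymmetric to the approximating metric on each ball $B(s,R\varepsilon)$, with the correct behavior on pairs of net points at scale $\approx\varepsilon$'' to ``$d_g$ is globally $\eta_2$-quasisymmetric to the glued metric'' is the technical heart of the argument; the paper does not prove it by a direct three-point estimate but imports it as Proposition~\ref{main_technical_prp}, a special case of Ntalampekos's local-to-global theorem for $(K,L)$-approximations. A scale-uniform three-point estimate of the kind you describe would have to handle triples $(x,y,z)$ straddling the net points and many intermediate scales simultaneously, and the doubling/LLC data you invoke do not by themselves supply the multiplicative control across scales that your sketch assumes. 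If you want to avoid citing that local-to-global result, you need to supply its proof; as written, both the convergence step and the uniformity step lean on a property that quasisymmetries do not have.
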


The constants implicit in the above conclusion depend only on the quasisymmetric distortion and the dimension of $X$. The converse also holds; the proof of this direction of \cite[Theorem~1.8]{N25} works in any dimension. We note that, in case $(X,d)$ is a length space, the approximating sequence can be taken to be Riemannian, not just locally so. This is explained in Remark~\ref{remove_locally_rmk}. Remark~\ref{lack_of_compactness_rmk} discusses the degree to which the compactness assumption is necessary.\\

The proof of Theorem~\ref{quasi_thm} follows the same general scheme as that in \cite{N25} and, in particular, uses a special case of its main technical result \cite[Theorem~2.8]{N25} (stated in the present note as Proposition~\ref{main_technical_prp}). The difference in the proofs lies in the construction to which Proposition~\ref{main_technical_prp} is applied. In \cite[Sections~3 and 4]{N25}, the Riemannian manifold $(X,d_g)$ is first triangulated in a controlled way, then the triangulation is modified and finally re-smoothed. The modification step of \cite{N25} only works in dimension 2 and the re-smoothing step only works in dimension up to 4. The construction in Section~\ref{main_sec} of the present note instead remains within the smooth setting. We conformally rescale the metric by a function $\distortion$, which measures the length distortion at scale $\varepsilon$ (cf. \cite[Section~7.8]{HK98}). This construction not only works in any dimension, but is much more simply described. We hope that the simplicity of the construction will be of further use even in the 2-dimensional case.\\

A similar result also holds in the bi-Lipschitz case, generalizing \cite[Theorem~1.11]{N25} to arbitrary dimension:

\begin{thm}\label{Lipschitz_thm}
Let $(X,d)$ be a compact connected metric space. If $(X,d)$ is $L$-bi-Lipschitz homeomorphic to a Riemannian manifold $(X,d_g)$, then $(X,d)$ is the Gromov-Hausdorff limit of a sequence of metric spaces which are locally isometric to $(X,Ld_g)$ and $L$-bi-Lipschitz homeomorphic to $(X,d_g)$.
\end{thm}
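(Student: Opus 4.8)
The plan is to run the same argument as for Theorem~\ref{quasi_thm}, exploiting the fact that an $L$-bi-Lipschitz homeomorphism is in particular an $\eta$-quasisymmetry with $\eta(t)=L^2t$, and then to track the constants carefully. First I would reduce to the case in which the given bi-Lipschitz homeomorphism is the identity: pulling $d_g$ back along it replaces $(X,d_g)$ by an isometric Riemannian manifold, so we may assume $d$ and $d_g$ live on the same set $X$ with $\tfrac1L d_g\le d\le L d_g$, and recall that on the connected Riemannian manifold $X$ the metric $d_g$ is a length metric. Then I would carry out the construction of Section~\ref{main_sec} verbatim: for $\varepsilon>0$ form the length-distortion function $\distortion$ of $d$ relative to $d_g$ at scale $\varepsilon$ and the conformally rescaled continuous metric $\continuousmetric$. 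The simplification over the quasisymmetric case is that the bi-Lipschitz bounds give the pointwise, $\varepsilon$-uniform estimate $\tfrac1L\le\distortion\le L$; hence the length metric $\continuousmetric$ of the continuous conformal metric $\distortion^2 g$ satisfies $\tfrac1L d_g\le\continuousmetric\le L d_g$ at once, so the identity map $(X,\continuousmetric)\to(X,d_g)$ is $L$-bi-Lipschitz without any quasisymmetry computation.

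Next I would establish that $\continuousmetric\to d$ in the Gromov--Hausdorff sense as $\varepsilon\to0$. This is the same argument as in the quasisymmetric case and is essentially the content of \cite[Section~7.8]{HK98}: $\distortion$ records the infinitesimal length distortion at scale $\varepsilon$, so integrating it along paths recovers $d$ in the limit, and compactness of $X$ makes the convergence uniform. I would then apply Proposition~\ref{main_technical_prp} to $(X,\continuousmetric)$ to obtain, for each $\varepsilon$, a metric space $Z_\varepsilon$ that is locally isometric to a Riemannian manifold, lies within $o(1)$ of $(X,\continuousmetric)$ in the Gromov--Hausdorff distance, and carries a comparison homeomorphism to $(X,d_g)$ with controlled distortion. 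Composing with the $L$-bi-Lipschitz identity $(X,\continuousmetric)\to(X,d_g)$ and passing to a diagonal sequence in $\varepsilon$ and the approximation parameter produces a sequence converging to $(X,d)$.

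The remaining, and main, point is to upgrade the two metric conclusions to the sharp ones claimed: that the $Z_\varepsilon$ can be taken \emph{locally isometric to $(X,Ld_g)$}, i.e.\ to the Riemannian manifold $(X,L^2 g)$, rather than to an arbitrary Riemannian manifold, and \emph{$L$-bi-Lipschitz}, not merely $L(1+o(1))$-bi-Lipschitz, to $(X,d_g)$. Here the inequality $\distortion\le L$ is exactly what singles out $(X,Ld_g)$ as the local model: each local piece of $Z_\varepsilon$ arising from the construction behind Proposition~\ref{main_technical_prp} should be modeled on a piece of $(X,L^2 g)$, with the discrepancy between $\distortion$ and $L$ (and between $\continuousmetric$ and $d$) absorbed into the gluing pattern rather than into a conformal factor. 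I expect the bookkeeping required to do this compatibly across overlapping pieces, while keeping the \emph{global} bi-Lipschitz constant equal to $L$ and not $L(1+o(1))$, to be the main obstacle: the naive composition of distortions loses a factor $1+o(1)$, so obtaining the sharp constant forces one to use the special structure of $\continuousmetric$ — its conformality to $d_g$ and the uniform bound $\distortion\le L$ — rather than treating it as a black-box metric. Granting the precise form of Proposition~\ref{main_technical_prp}, which is designed to handle conformal factors in exactly this way, Theorem~\ref{Lipschitz_thm} then follows.
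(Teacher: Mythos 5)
Your proposal goes down the wrong road and leaves unresolved exactly the two points you yourself flag as ``the main obstacle''; the paper's proof of Theorem~\ref{Lipschitz_thm} is a three-line argument that avoids them entirely. The intended proof does \emph{not} use the conformal factor $\distortion$, the metric $\continuousmetric$, or Proposition~\ref{main_technical_prp} at all. One observes that $d\le Ld_g$, takes an $\varepsilon$-net $S$ in $(X,Ld_g)$ via Lemma~\ref{net_lmm}, and lets $\smoothmetric$ be the glued metric of Lemma~\ref{gluing_lmm} determined by the \emph{globally} rescaled metric $Ld_g$ and $d|_{S\times S}$. Lemma~\ref{gluing_lmm} then gives everything at once: $(X,\smoothmetric)$ is locally isometric to $(X,Ld_g)$ because $S$ is discrete, and it is $L$-bi-Lipschitz to $(X,d_g)$ because $\smoothmetric\le Ld_g$ while $d\ge L^{-2}(Ld_g)$ on $S\times S$ forces $\smoothmetric\ge L^{-1}d_g$. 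The sharp constant $L$ and the exact local model $(X,Ld_g)$ are built in from the start rather than recovered by bookkeeping. Finally, $(S,d)$ is $\varepsilon$-dense in both $(X,\smoothmetric)$ and $(X,d)$, so these spaces are $2\varepsilon$-close in the Gromov--Hausdorff distance.

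Two concrete errors in your route. First, your claim that $\continuousmetric\to d$ in the Gromov--Hausdorff sense is false in general: $\continuousmetric$ is a length metric for every $\varepsilon$, Gromov--Hausdorff limits of compact length spaces are length spaces, and a metric $L$-bi-Lipschitz to $d_g$ need not be a length metric (e.g.\ the chordal metric on an embedded sphere). This is precisely why the paper glues in $d|_{S\times S}$ on a net: the gluing step of Lemma~\ref{gluing_lmm}, not any rescaling, is what forces convergence to $d$, and your proposal never invokes it. Second, you assign to Proposition~\ref{main_technical_prp} a role it cannot play: it takes the glued metric as \emph{input} and outputs only a quasisymmetry $\eta_2$ between $d_g$ and the glued metric; it produces no approximating spaces, yields no bi-Lipschitz conclusion, and has no mechanism for ``absorbing'' the discrepancy between $\distortion$ and $L$ or for recovering the sharp constant. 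Granting its ``precise form'' therefore does not close the gap you acknowledge.
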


Compared to \cite[Theorem~1.11]{N25}, Theorem~\ref{Lipschitz_thm} not only holds in arbitrary dimension, but also retains complete control over the Lipschitz constant. The proof of Theorem~\ref{Lipschitz_thm} follows the proof in \cite{N25}. In this case, the simplification is even more drastic, as the gluing construction of \cite[Section~2.2]{N25} and a uniform scaling are all that are required.\\

The structure of this note is as follows. Sections~\ref{prelim_sec} and \ref{approx_sec} recall the necessary background from \cite{N25}. The focus of Section~\ref{prelim_sec} is Lemma~\ref{gluing_lmm}. This alone suffices to prove Theorem~\ref{Lipschitz_thm}. The focus of Section~\ref{approx_sec} is Proposition~\ref{main_technical_prp}, which we show to be a very special case of \cite[Theorem~2.8]{N25}. It is a local-to-global result for quasisymmetries which serves as the technical heart of the proof of \cite[Theorem~1.8]{N25} and Theorem~\ref{quasi_thm}. Instead of the $(K,L)$-approximations of~\cite{N25}, we restrict Proposition~\ref{main_technical_prp} to $\varepsilon$-nets; Lemma~\ref{net_to_approx_lmm} justifies this simplification. Section~\ref{main_sec} consists of the novel construction of this note and contains the proofs of Theorems~\ref{quasi_thm} and \ref{Lipschitz_thm}, replacing the twelve pages of \cite[Sections~3 and 4]{N25}.\\

\textbf{Acknowledgements:} The author would like to thank Dimitrios Ntalampekos for his guidance during the writing of this note and Aleksey Zinger and the anonymous referee for helpful comments which greatly improved the exposition.

\section{Basic metric and Riemannian geometry}\label{prelim_sec}
Let $d$ and $\rho$ be metrics on a space $X$. Let $\eta: [0,\infty) \lra {[}0,\infty)$ be a homeomorphism. We say the metric $d$ is \textit{$\eta$-quasisymmetric} to $\rho$ if the identity map $\textnormal{id}: (X,d) \lra (X,\rho)$ is an $\eta$-quasisymmetry, i.e.
\begin{equation}\label{quasisymmetry_inequality}
\rho(p,q) \leq \eta \bigg(\frac{d(p,q)}{d(q,s)}\bigg)\rho(q,s)
\end{equation}
for all points $p$, $q$, and $s \neq q$ in $X$. Let $L \geq 1$. The metrics $d$ and $\rho$ are \textit{$L$-bi-Lipschitz} if
$$L^{-1}d(p,q) \leq \rho(p,q) \leq L d(p,q)$$
for all points $p$ and $q$ in $X$. We denote the open $r$-ball centered at $p \in X$ by $B(p,r)$ and the closed ball by $\overline{B}(p,r)$. We note that the closed ball is the set of points $q \in X$ such that $d(p,q) \leq r$, not necessarily the closure of the open ball. If $A \subset X$, then $N(A,r)$ denotes the open $r$-neighborhood of the set $A$. The \textit{Hausdorff distance} between subsets $A$ and $B$ of a metric space is the infimal $r \geq 0$ such that $B \subset N(A,r)$ and $A \subset N(B,r)$. The \textit{Gromov-Hausdorff distance} between two metric spaces $X$ and $Y$ is the infimal Hausdorff distance between $f(X)$ and $g(Y)$ over all metric spaces $Z$ and isometric embeddings $f: X \lra Z$ and $g: Y \lra Z$.

\begin{lmm}[{\cite[p78]{H01}}]\label{bi-Lip_are_QS_lmm}
Let $d,\rho$ be metrics on the same space. If $d$ is $L$-bi-Lipschitz to $\rho$, then $d$ is $\eta$-quasisymmetric to $\rho$, where $\eta(t) = L^2t$.
\end{lmm}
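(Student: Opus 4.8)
The plan is to verify the defining inequality \eqref{quasisymmetry_inequality} directly from the two-sided Lipschitz bounds, with no auxiliary constructions needed. First I would check that $\eta(t)=L^2t$ is genuinely a homeomorphism of $[0,\infty)$ onto itself: since $L\geq 1$, the map $t\mapsto L^2t$ is a strictly increasing linear bijection fixing $0$, hence a homeomorphism.

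Next, fix points $p,q,s\in X$ with $q\neq s$, so that $d(q,s)>0$ and the ratio $d(p,q)/d(q,s)$ is well defined. I would use the bi-Lipschitz condition in both of its guises: on the one hand $\rho(p,q)\leq L\,d(p,q)$, and on the other hand $d(q,s)\leq L\,\rho(q,s)$, the latter being just the left-hand inequality $L^{-1}d(q,s)\leq\rho(q,s)$ rearranged (the bi-Lipschitz relation being symmetric in the roles of $d$ and $\rho$). Chaining these two estimates gives
\[
\rho(p,q)\ \leq\ L\,d(p,q)\ =\ L^2\,\frac{d(p,q)}{d(q,s)}\cdot\frac{d(q,s)}{L}\ \leq\ L^2\,\frac{d(p,q)}{d(q,s)}\,\rho(q,s)\ =\ \eta\!\left(\frac{d(p,q)}{d(q,s)}\right)\rho(q,s),
\]
which is exactly \eqref{quasisymmetry_inequality}.

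There is no real obstacle here; the only points that require any care are that the hypothesis $q\neq s$ guarantees the denominator $d(q,s)$ is strictly positive, so that the manipulation above is legitimate, and that one must invoke the lower Lipschitz bound, not the upper one, when estimating $\rho(q,s)$ from below. Since $p$, $q$, and $s\neq q$ were arbitrary, this establishes that $d$ is $\eta$-quasisymmetric to $\rho$.
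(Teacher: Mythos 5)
Your proof is correct: the chain $\rho(p,q)\leq L\,d(p,q)$ followed by $d(q,s)\leq L\,\rho(q,s)$ yields exactly \eqref{quasisymmetry_inequality} with $\eta(t)=L^2t$, and your attention to $d(q,s)>0$ and to using the lower Lipschitz bound on $\rho(q,s)$ is exactly what is needed. The paper gives no proof of this lemma (it is cited from Heinonen), and your argument is the standard direct verification one would find there.
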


\begin{lmm}[{\cite[Corollary~11.5]{H01}}]\label{Holder_lmm}
If $(X,d_g)$ is a compact Riemannian manifold and $d_g$ is quasisymmetric to a metric $d$, then there exist $C \geq 1$ and $\alpha \leq 1$ such that
$$d(p,q) \leq C d_g(p,q)^\alpha$$
for $p,q \in X$ with $d_g(p,q) < 1$.
\end{lmm}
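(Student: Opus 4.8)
The plan is to prove H\"older continuity of the identity map $(X,d_g) \to (X,d)$ by combining the quasisymmetry inequality \eqref{quasisymmetry_inequality} with compactness of $X$ and a doubling/chaining argument. The hypothesis gives us a distortion function $\eta$; the key structural facts I want to exploit are that $(X,d_g)$ is a Riemannian manifold (hence locally bi-Lipschitz to Euclidean balls, in particular locally doubling, and—being compact—globally doubling) and that $(X,d)$ is homeomorphic to $X$ via the identity, so both metrics induce the same topology and $X$ is bounded in both.

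First I would record the standard consequence of quasisymmetry for pairs of nearby points: applying \eqref{quasisymmetry_inequality} cleverly, a quasisymmetry between bounded spaces is in fact weakly quasisymmetric, and on a connected doubling space one upgrades this to a genuine modulus-of-continuity estimate. Concretely, I would fix a chain of points $q = x_0, x_1, \dots, x_N = p$ along a $d_g$-geodesic (or near-geodesic) with $d_g(x_i, x_{i+1}) \approx d_g(p,q)/N$, where $N \approx d_g(p,q)^{-1}$ is chosen so that consecutive points are at unit-order distance; then I would run \eqref{quasisymmetry_inequality} along the chain, comparing each $\rho(x_i, x_{i+1})$ to a fixed reference distance. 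The doubling property of $(X,d_g)$ controls how many such chains/scales are needed, and the compactness of $(X,d)$ gives a uniform upper bound $\mathrm{diam}_d(X)$ that seeds the induction on scales.

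Alternatively—and this is cleaner to write—one invokes the standard theorem (Heinonen's book, exactly the cited Theorem 11.3) that an $\eta$-quasisymmetric embedding of a connected space with the diameters of both the source and target bounded is H\"older: there exist $C \ge 1$ and $\alpha \in (0,1]$ depending only on $\eta$ and the doubling constants such that $d(p,q) \le C\, d_g(p,q)^\alpha$ once $d_g(p,q) < \mathrm{diam}_{d_g}(X)$, which after rescaling we may take to be $1$. So the proof reduces to: (i) observe $(X,d)$ has finite diameter because $X$ is compact and $d$ induces the topology of the compact manifold; (ii) observe $(X,d_g)$ is connected with finite diameter; (iii) observe $(X,d_g)$ is doubling, being a compact Riemannian manifold; (iv) apply the cited quasisymmetric-H\"older theorem and absorb the rescaling into the constant $C$. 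The exponent $\alpha$ comes out $\le 1$ automatically from the form of the estimate (or one notes $\alpha = 1$ is allowed when $\eta$ is linear, i.e.\ the bi-Lipschitz case, consistent with Lemma~\ref{bi-Lip_are_QS_lmm}).

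The main obstacle is making precise the passage from the pointwise quasisymmetry inequality to a uniform H\"older bound, i.e.\ controlling the constants $C$ and $\alpha$ so they depend only on $\eta$ and the geometry of $(X,d_g)$ and not on the pair $(p,q)$; this is precisely where connectedness, the doubling property, and the boundedness of both metric spaces all enter, and it is exactly the content of the quoted theorem from Heinonen. Since the statement explicitly cites \cite[Theorem~11.3]{H01}, the honest proof here is simply to verify that the hypotheses of that theorem hold—finiteness of both diameters (compactness), connectedness, and doubling—and then quote it; the only genuine work is the diameter rescaling to reduce to the range $d_g(p,q) < 1$.
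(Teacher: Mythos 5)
The paper offers no proof of this lemma at all---it is quoted verbatim as \cite[Theorem~11.3]{H01}---and your proposal does essentially the same thing: verify the hypotheses of Heinonen's theorem (connectedness, the doubling property, and finiteness of both diameters, all immediate from compactness of the manifold) and then cite it, absorbing the normalization by $\mathrm{diam}_{d_g}(X)$ into the constant $C$. This matches the paper's approach; the only point worth noting is that the lemma as stated omits the word ``compact,'' which you correctly supply from the context in which it is applied.
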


\begin{lmm}[{\cite[Proposition~95]{B03}}]\label{local_control_lmm}
For each compact Riemannian $n$-manifold $(X,d_g)$, there exists $c>0$ such that for each $r \in (0,c)$ and point $p \in X$, $B(p, r)$ is convex and $2$-bi-Lipschitz homeomorphic to an $\varepsilon$-ball in~$\R^n$ with the standard Euclidean metric $d_{std}$.
\end{lmm}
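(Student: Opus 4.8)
The plan is to transport the problem to a Euclidean ball via the exponential map and to use the compactness of $X$ to make every estimate uniform in the basepoint $p$; as the statement is quoted from \cite{B03}, one may alternatively just invoke that reference.

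Since $X$ is compact it has a positive injectivity radius $i_0>0$; shrinking $i_0$ if necessary, we may also assume that any minimizing geodesic between two points of a ball $B(p,i_0/4)$ stays inside $B(p,i_0)$. Fix $p$ and let $\ti g=(\ti g_{ij})$ be the pullback of $g$ under the diffeomorphism $\exp_p\colon B(0,i_0)\subset T_pX\cong\R^n\to B(p,i_0)$. Since the radial geodesics are minimizing, $\exp_p$ carries the Euclidean $r$-ball $B(0,r)$ exactly onto $B(p,r)$ for each $r<i_0$; and for $r<i_0/16$, say, the minimizing $g$-geodesic between any two points of $B(p,r)$ stays in $\exp_p\big(B(0,4r)\big)$, so that $d_g$ restricted to $B(p,r)$ is, via $\exp_p$, the length metric of $\big(B(0,4r),\ti g\big)$ restricted to $B(0,r)$.

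In these normal coordinates $\ti g_{ij}(0)=\de_{ij}$ and $\partial_k\ti g_{ij}(0)=0$, and the second derivatives of the $\ti g_{ij}$ are controlled by the curvature of $g$ and its covariant derivatives; by compactness these bounds are independent of $p$, so there is a constant $C$, uniform over $X$, with $|\ti g_{ij}(x)-\de_{ij}|\le C|x|^2$ for $|x|<i_0$. Choose $c\in(0,i_0/16)$, independent of $p$, small enough that for $|x|<4c$ every eigenvalue of the symmetric matrix $(\ti g_{ij}(x))$ lies in $[\tfrac12,2]$. Comparing lengths of curves in the convex open set $B(0,4c)$, the length metric of $\big(B(0,4c),\ti g\big)$ is $\sqrt2$-bi-Lipschitz to the restricted Euclidean metric, which on this convex set equals $|x-y|$. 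Combined with the previous paragraph, $\exp_p$ is therefore a $\sqrt2$-bi-Lipschitz, hence $2$-bi-Lipschitz, homeomorphism from the Euclidean ball $B(0,r)\subset\R^n$ onto $\big(B(p,r),d_g\big)$, for every $p\in X$ and every $r<c$.

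Finally, shrinking $c$ once more, $B(p,r)$ is geodesically convex for $r<c$: this is the classical fact that every point of a Riemannian manifold has arbitrarily small geodesically convex neighborhoods, made uniform by compactness. Concretely, in the normal coordinates above the Christoffel symbols vanish at $0$ and are uniformly bounded, which forces $x\mapsto|x|^2$ to be strictly geodesically convex on $B(0,4c)$ for $c$ small; strict convexity of this function is equivalent to geodesic convexity of its sublevel balls $B(p,r)$. The only real subtlety is precisely this uniformity in $p$ — the injectivity radius, the estimate $|\ti g_{ij}(x)-\de_{ij}|\le C|x|^2$, and the convexity threshold must all be chosen independent of the basepoint — and compactness of $X$ is exactly what supplies it.
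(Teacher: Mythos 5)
The paper offers no proof of this lemma at all --- it is quoted directly from Berger's \emph{A Panoramic View of Riemannian Geometry} (Proposition~95, positivity of the convexity radius on a compact manifold, together with the standard metric comparison in normal coordinates) --- so there is nothing in the source to compare against line by line. Your argument is the standard self-contained proof of the cited fact and is correct: compactness gives a uniform injectivity radius; the Taylor expansion $|\ti g_{ij}(x)-\de_{ij}|\le C|x|^2$ in normal coordinates is uniform in $p$ because $(p,v)\mapsto\exp_p(v)$ is smooth on a compact set; the eigenvalue pinching gives the $\sqrt2$-comparison of lengths, and you correctly handle the one genuine subtlety, namely that minimizing curves between points of $B(p,r)$ cannot leave the coordinate chart where the comparison holds. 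The convexity step is sketched rather than carried out (strict geodesic convexity of $d_g(p,\cdot)^2$ near $p$, uniformly in $p$), but this is exactly the classical Whitehead/convexity-radius argument you name, and invoking it is no less rigorous than what the paper does by citing Berger. The ``$\varepsilon$-ball'' in the statement is evidently a typo for ``$r$-ball,'' and your conclusion matches the intended one.
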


\begin{crl}\label{throw_crl}
Let $(X,d_g)$ be a compact Riemannian manifold. For every $r > 0$ sufficiently small, and $p,q \in X$ such that $d_g(p,q) < r$, there exists a point $s \in X$ such that
\begin{equation}\label{throw_ineq_1}
8^{-1}r \leq d_g(p,s) \leq 8r \quad \textnormal{and} \quad 8^{-1}r \leq d_g(q,s) \leq 8r.
\end{equation}
\end{crl}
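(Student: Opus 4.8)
The plan is to take $s$ to be any point lying exactly on the metric ``sphere'' $\{x\in X:\ d_g(p,x)=2r\}$ around $p$; the two required pairs of inequalities then follow immediately from the triangle inequality, so the only step that genuinely needs an argument is the nonemptiness of this sphere for small~$r$. Indeed, once such an $s$ is fixed we have $d_g(p,s)=2r\in[8^{-1}r,8r]$ at once, while the reverse and ordinary triangle inequalities give $d_g(q,s)\ge d_g(p,s)-d_g(p,q)>2r-r=r\ge 8^{-1}r$ and $d_g(q,s)\le d_g(p,s)+d_g(p,q)<2r+r=3r\le 8r$, using only the hypothesis $d_g(p,q)<r$.

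So the work is in producing a point at distance exactly $2r$ from $p$, and here I would simply invoke connectedness of $X$ (implicit throughout; see Theorems~\ref{quasi_thm} and~\ref{Lipschitz_thm}). The function $f:=d_g(p,\cdot)\colon X\to[0,\infty)$ is $1$-Lipschitz, hence continuous, so $f(X)$ is a subinterval of $[0,\infty)$ containing $f(p)=0$; by compactness it equals $[0,\max f]$. Since $d_g(y,z)\le f(y)+f(z)\le 2\max f$ for all $y,z\in X$, we get $\max f\ge\tfrac12\operatorname{diam}(X,d_g)$, which is positive since $X$ is not a single point. Hence for every $r$ with $0<r<\tfrac14\operatorname{diam}(X,d_g)$ --- in particular for all sufficiently small $r>0$ --- we have $2r\in[0,\max f]=f(X)$, so a point $s$ with $d_g(p,s)=2r$ exists. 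Together with the previous paragraph this proves the corollary.

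I do not expect any real obstacle. The constant $8$ is generous (any target value in $[2r,4r]$ for $d_g(p,s)$ would do), and Lemma~\ref{local_control_lmm} is not actually needed for the statement as written: this is really a fact about compact connected metric spaces. The single point requiring care is precisely the nonemptiness of the sphere, which forces one to use connectedness (through the intermediate value theorem) together with $\operatorname{diam}(X)>0$. If for later purposes one wanted the stronger conclusion that $s$ lies inside the convex coordinate ball around $p$ furnished by Lemma~\ref{local_control_lmm} --- which the corollary does not ask --- one could instead set $s=\exp_p(2rv)$ for a unit vector $v\in T_pX$ with $\langle v,\exp_p^{-1}(q)\rangle\le 0$ and rerun the same bookkeeping, now with the bi-Lipschitz control of $\exp_p$ replacing the radial isometry $d_g(p,\cdot)=|\cdot|$; the slack in the constants absorbs the distortion.
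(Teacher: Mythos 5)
Your proof is correct, but it takes a genuinely different route from the paper's. The paper transfers the problem to $\R^n$ via the $2$-bi-Lipschitz chart of Lemma~\ref{local_control_lmm} and takes $s$ on the Euclidean line through $p$ and $q$ at standard distance $4r$ from $p$; the generous constant $8$ is there precisely to absorb the bi-Lipschitz distortion on both legs. You instead place $s$ on the intrinsic sphere $\{x : d_g(p,x)=2r\}$, whose nonemptiness you get from the intermediate value theorem applied to the $1$-Lipschitz function $d_g(p,\cdot)$ on the connected compact space $X$, together with the bound $\max_X d_g(p,\cdot)\ge \tfrac12\operatorname{diam}(X)>0$. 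This is more elementary (no coordinate chart, no Riemannian structure at all --- it works in any compact connected metric space), and it yields the sharper conclusion $d_g(p,s)=2r$ and $r\le d_g(q,s)\le 3r$. The one hypothesis you use that the corollary does not literally state is connectedness of $X$; you flag this correctly, it is a standing assumption everywhere the corollary is applied (and for a disconnected compact manifold one could run the same argument in the component of $p$ once $r$ is below a quarter of the minimal component diameter). The paper's chart-based proof, by contrast, is purely local near $p$ and so does not need connectedness, which is presumably why the corollary is phrased without it. Both arguments are valid; yours is cleaner and more general for the statement as applied.
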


\begin{proof}
By Lemma~\ref{local_control_lmm}, we can associate $p$ and $q$ with points in $\R^n$ such that $d_{std}(p,q) < 2r$. Therefore, there is a point $s$ on the line through $p$ and $q$ such that 
$$d_{std}(p,s) = 4r = d_{std}(p,q) + d_{std}(q,s).$$
It follows from the fact that
$$d_{std}(q,s) = 4r - d_{std}(p,q) \quad  \textnormal{and}\quad 2r \leq 4r - d_{std}(p,q) \leq 4r$$
that
$$ 2r \leq d_{std}(q,s) \leq 4r.$$
As $d_{std}$ is $2$-bi-Lipschitz to $d_g$,
\begin{equation*}
\begin{split}
2r &= 2^{-1}d_{std}(p,s) \leq d_g(p,s) \leq 2d_{std}(p,s) = 8r \quad \textnormal{and}\\
r &\leq 2^{-1}d_{std}(q,s) \leq d_g(q,s) \leq 2d_{std}(q,s) \leq 8r.
\end{split}
\end{equation*}
Thus, $s\in X$ satisfies (\ref{throw_ineq_1}).
\end{proof}

\begin{lmm}[{\cite[Theorem~52]{B03}}]\label{Hopf_Rinow_lmm}
The distance between any two points of a compact Riemannian manifold is realized as the length of a length-minimzing geodesic connecting them.
\end{lmm}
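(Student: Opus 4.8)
The plan is to prove directly the relevant case of the Hopf--Rinow theorem, assuming (as in the main theorems of the note) that $(X,d_g)$ is connected, so that $d_g$ takes finite values. First I would record that a compact Riemannian manifold is complete as a metric space and geodesically complete, so that for each $p$ the exponential map $\exp_p$ is defined on all of $T_pX$; standard Riemannian geometry — a quantitative form of which is recorded in Lemma~\ref{local_control_lmm} — supplies around every point arbitrarily small \emph{normal balls}, whose radial geodesics are length-minimizing. Fix $p,q \in X$ and set $r = d_g(p,q)$; if $r = 0$ there is nothing to prove, so assume $r > 0$.

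Next I would choose $\delta \in (0,r)$ small enough that $\overline{B}(p,\delta)$ is a normal ball, and set $S = \{x \in X : d_g(p,x) = \delta\}$, a compact set with $q \notin S$. Since $q$ lies outside $\overline{B}(p,\delta)$, every rectifiable curve from $p$ to $q$ must cross $S$, and splitting such a curve at a crossing point gives $r \ge \delta + \min_{x \in S} d_g(x,q)$; the triangle inequality gives the reverse inequality, so the minimum is attained at some $x_0 = \exp_p(\delta v)$ with $|v| = 1$ and $d_g(x_0,q) = r - \delta$. Let $\gamma(s) = \exp_p(sv)$ be the resulting unit-speed geodesic, defined for all $s \ge 0$. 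The crux of the argument is the claim that $\gamma(r) = q$.

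To prove the claim I would consider $A = \{ s \in [0,r] : d_g(\gamma(s),q) = r - s\}$, which is closed by continuity and contains $\delta$, hence has a maximum $s_0$. Supposing $s_0 < r$, I would rerun the previous step at $\gamma(s_0)$: choose a normal ball $\overline{B}(\gamma(s_0),\delta')$ with $0 < \delta' < r - s_0$, and a point $x_0'$ on its bounding sphere minimizing $d_g(\cdot,q)$, so that $d_g(x_0',q) = (r-s_0) - \delta'$. Then $d_g(p,x_0') \ge r - d_g(x_0',q) = s_0 + \delta'$, while concatenating $\gamma|_{[0,s_0]}$ with a radial minimizing geodesic from $\gamma(s_0)$ to $x_0'$ exhibits a curve from $p$ to $x_0'$ of length $s_0 + \delta'$; hence this concatenation is length-minimizing. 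The one substantial input is that a length-minimizing curve is, after reparametrization, a smooth geodesic, so the concatenation has no corner at $\gamma(s_0)$ and therefore coincides with $\gamma|_{[0,s_0+\delta']}$; in particular $x_0' = \gamma(s_0+\delta')$, so $d_g(\gamma(s_0+\delta'),q) = r - (s_0+\delta')$ and $s_0 + \delta' \in A$, contradicting maximality. Thus $s_0 = r$, $\gamma(r) = q$, and $\gamma|_{[0,r]}$ is a geodesic of length $r = d_g(p,q)$ from $p$ to $q$, hence length-minimizing.

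The main obstacle is precisely the rigidity step invoked above — that a curve realizing the distance between its endpoints must be an unbroken smooth geodesic. This is the first-variation-of-energy computation: a minimizing curve has vanishing first variation against all compactly supported vector fields along it, which forces the geodesic equation in the interior, together with a short comparison of one-sided velocities to exclude a corner at the junction point. Everything else reduces to the triangle inequality, compactness of geodesic spheres, and the local normal-ball structure already available from Lemma~\ref{local_control_lmm}.
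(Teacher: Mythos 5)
The paper offers no proof of this lemma at all: it is quoted directly from Berger \cite[Theorem~52]{B03} as a standard fact, so there is nothing to compare line by line. Your argument is the classical proof of the Hopf--Rinow theorem in the compact case, and it is correct: the reduction to a point $x_0$ on a small geodesic sphere with $d_g(x_0,q)=r-\delta$, the closed set $A$ with maximum $s_0$, and the no-corner rigidity of minimizing curves (via the first variation formula) to push $s_0$ up to $r$ are exactly the standard steps, and you have correctly flagged the one nontrivial input (that a distance-realizing curve is an unbroken geodesic). This is more than the note requires, but it is a complete and sound substitute for the citation.
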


\begin{lmm}[{\cite[Lemma~2.2]{N25}}]\label{gluing_lmm}
Let $(X,\rho)$ be a metric space and $S\subset X$ be a closed set with a metric $d$ such that $d\leq \rho$ on $S\times S$. Then, the function
\begin{align*}
\wt{\rho}: X \times X \lra {[}0,\infty {)}, \quad \wt{\rho}(x,y) := \min \bigl\{\rho(x,y), \inf_{p,q\in S} \{\rho(x,p)+d(p,q)+\rho(q,y)\}\bigr\},
\end{align*}
is a metric on $X$ such that $\wt{\rho} \leq \rho$. If $(S,d)$ is a discrete metric space, then the identity map from $(X,\rho)$ onto $(X,\wt{\rho})$ is a local isometry. If $Ld \geq \rho$ on $S \times S$ for some $L \geq 1$, then $\wt{\rho}$ is $L$-bi-Lipschitz to $\rho$.
\end{lmm}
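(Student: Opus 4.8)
The plan is to verify the metric axioms for $\wt\rho$ directly and then extract the two conditional statements from the estimates produced along the way. Write $\sigma(x,y):=\inf_{p,q\in S}\{\rho(x,p)+d(p,q)+\rho(q,y)\}$, so that $\wt\rho=\min\{\rho,\sigma\}$; if $S=\eset$ there is nothing to prove, so assume $S\neq\eset$. Symmetry, non-negativity, $\wt\rho(x,x)=0$, and $\wt\rho\leq\rho$ are immediate from the definition. For positivity, suppose $\wt\rho(x,y)=0$ with $x\neq y$. As $\rho(x,y)>0$, necessarily $\sigma(x,y)=0$, so there are $p_n,q_n\in S$ with $\rho(x,p_n)\to 0$, $d(p_n,q_n)\to 0$, and $\rho(q_n,y)\to 0$. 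Then $p_n\to x$ and $q_n\to y$ in $(X,\rho)$, so $x,y\in\ov S=S$, and $d(x,y)\leq d(x,p_n)+d(p_n,q_n)+d(q_n,y)\leq\rho(x,p_n)+d(p_n,q_n)+\rho(q_n,y)\to 0$, contradicting $x\neq y$.

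The substantive point is the triangle inequality $\wt\rho(x,z)\leq\wt\rho(x,y)+\wt\rho(y,z)$. Since $\wt\rho(x,z)=\min\{\rho(x,z),\sigma(x,z)\}$, I would bound this minimum by $\wt\rho(x,y)+\wt\rho(y,z)$, splitting into cases according to whether each of $\wt\rho(x,y)$, $\wt\rho(y,z)$ is the $\rho$-value or the $\sigma$-value. If both are $\rho$-values, the triangle inequality for $\rho$ gives $\rho(x,z)\leq\rho(x,y)+\rho(y,z)$. If exactly one is a $\sigma$-value, say $\wt\rho(y,z)=\sigma(y,z)$, then for $\ve>0$ pick $p,q\in S$ with $\rho(y,p)+d(p,q)+\rho(q,z)<\sigma(y,z)+\ve$; using $(p,q)$ as a competitor for $\sigma(x,z)$ together with $\rho(x,p)\leq\rho(x,y)+\rho(y,p)$ gives $\sigma(x,z)<\rho(x,y)+\sigma(y,z)+\ve$, and $\ve\to 0$ finishes (the case $\wt\rho(x,y)=\sigma(x,y)$, $\wt\rho(y,z)=\rho(y,z)$ is symmetric). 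The one genuinely new case, which I expect to be the main obstacle, is when both steps are $\sigma$-values: for $\ve>0$ pick $(p_1,q_1)$ near-optimal for $\sigma(x,y)$ and $(p_2,q_2)$ near-optimal for $\sigma(y,z)$, and splice the two chains at $y$. Since $q_1,p_2\in S$ and $d\leq\rho$ on $S\times S$, we have $\rho(q_1,y)+\rho(y,p_2)\geq\rho(q_1,p_2)\geq d(q_1,p_2)$, whence $d(p_1,q_1)+\rho(q_1,y)+\rho(y,p_2)+d(p_2,q_2)\geq d(p_1,q_1)+d(q_1,p_2)+d(p_2,q_2)\geq d(p_1,q_2)$ by the triangle inequality for $d$ on $S$. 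Adding $\rho(x,p_1)$ on the left and $\rho(q_2,z)$ on the right exhibits the competitor $\rho(x,p_1)+d(p_1,q_2)+\rho(q_2,z)$ for $\sigma(x,z)$ and shows $\sigma(x,z)<\sigma(x,y)+\sigma(y,z)+2\ve$; letting $\ve\to 0$ closes the case.

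For the discreteness statement I would show that each $x_0\in X$ has a $\rho$-ball $U$ on which $\sigma\geq\rho$, hence $\wt\rho=\rho$ on $U$. If $x_0\notin S$, then $\de:=\rho(x_0,S)>0$ because $S$ is closed, and for $x,y\in B(x_0,\de/3)$ every competitor $\rho(x,p)+d(p,q)+\rho(q,y)$ for $\sigma(x,y)$ has first term $\rho(x,p)\geq\rho(x_0,p)-\rho(x_0,x)>2\de/3>\rho(x,y)$, so $\sigma(x,y)\geq\rho(x,y)$. If $x_0\in S$, then $x_0$ is isolated in $(S,d)$, so there is $\de>0$ with $d(x_0,p)\geq\de$ and hence $\rho(x_0,p)\geq\de$ for all $p\in S\setminus\{x_0\}$; for $x,y\in B(x_0,\de/4)$ one checks each competitor term, which equals $\rho(x,x_0)+\rho(x_0,y)\geq\rho(x,y)$ when $p=q=x_0$, is at least $\rho(x,p)\geq 3\de/4>\rho(x,y)$ when $p\neq x_0$, and is at least $d(p,q)\geq\de>\rho(x,y)$ when $p=x_0\neq q$; again $\sigma(x,y)\geq\rho(x,y)$. (This is exhaustive: closedness of $S$ plus discreteness of $(S,d)$, via $d\leq\rho$, prevent $S$ from having $\rho$-accumulation points.) Finally, for the bi-Lipschitz statement, reading the hypothesis as $L^{-1}\rho\leq d$ on $S\times S$ (so that $L^{-1}\rho\leq d\leq\rho$ there), the bound $\wt\rho\leq\rho$ is already in hand, while every competitor satisfies $\rho(x,p)+d(p,q)+\rho(q,y)\geq L^{-1}\bigl(\rho(x,p)+\rho(p,q)+\rho(q,y)\bigr)\geq L^{-1}\rho(x,y)$, so $\sigma\geq L^{-1}\rho$ and therefore $\wt\rho\geq L^{-1}\rho$.
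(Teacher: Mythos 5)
Your proof is correct and complete: the splicing argument for the triangle inequality in the double-$\sigma$ case, the two local lower bounds $\sigma\geq\rho$ near points in and off the discrete set $S$, and the bound $\sigma\geq L^{-1}\rho$ are all exactly what is needed, and you rightly read the hypothesis of the last claim as $d\geq L^{-1}\rho$ on $S\times S$, correcting an evident typo in the statement. The paper itself imports this lemma from \cite[Lemma~2.2]{N25} without proof, so there is no argument in the source to compare against; yours is the standard verification for glued metrics.
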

We say that $\wt{\rho}$ is the \textit{glued metric determined by $\rho$ and $d$}. 

\section{Nets and approximations}\label{approx_sec}
Let $(X,d)$ be a metric space and $\varepsilon > 0$. A subset $S \subset X$ is called \textit{$\varepsilon$-dense} if, for each point $p \in X$, there is a point $s \in S$ such that $d(p,s) < \varepsilon$. A subset $S \subset X$ is called \textit{$\varepsilon$-separated} if $d(s,s') \geq \varepsilon$ for all pairs of distinct points $s$ and $s'$ in $S$. An \textit{$\varepsilon$-net} is an $\varepsilon$-dense and $\varepsilon$-separated subset. The following is an easy consequence of Zorn's lemma.

\begin{lmm}[{\cite[Exercise~12.10]{H01}}]\label{net_lmm}
Every metric space contains an $\varepsilon$-net for every $\varepsilon > 0$.
\end{lmm}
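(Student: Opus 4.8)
The plan is to apply Zorn's lemma to the poset of $\varepsilon$-separated subsets of $X$ ordered by inclusion, and to observe that a maximal element is automatically $\varepsilon$-dense. Concretely, I would let $\mathcal{P}$ be the collection of all $\varepsilon$-separated subsets of $X$, partially ordered by inclusion. This collection is nonempty, since the empty set (and, when $X\neq\eset$, any singleton) is vacuously $\varepsilon$-separated.

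Next I would check the chain hypothesis of Zorn's lemma. Given a totally ordered subcollection $\mathcal{C}\subset\mathcal{P}$, I claim its union $S_0=\bigcup\mathcal{C}$ lies in $\mathcal{P}$: any two distinct points $s,s'\in S_0$ belong to members $A,A'\in\mathcal{C}$, and since $\mathcal{C}$ is a chain one of $A,A'$ contains the other, so $s$ and $s'$ both lie in a single $\varepsilon$-separated set and hence $d(s,s')\geq\varepsilon$. Thus $S_0$ is an upper bound for $\mathcal{C}$ in $\mathcal{P}$, and Zorn's lemma furnishes a maximal element $S\in\mathcal{P}$.

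It remains to show $S$ is $\varepsilon$-dense, which is the only step carrying any content. Fix $p\in X$. If $d(p,s)\geq\varepsilon$ for every $s\in S$, then in particular $p\notin S$ (otherwise $d(p,p)=0<\varepsilon$), and $S\cup\{p\}$ would be an $\varepsilon$-separated set strictly containing $S$, contradicting maximality. Hence there exists $s\in S$ with $d(p,s)<\varepsilon$, so $S$ is $\varepsilon$-dense as well as $\varepsilon$-separated, i.e.\ an $\varepsilon$-net. I anticipate no genuine obstacle here; the only points needing care are the vacuous cases (the empty space and the empty chain) and the interplay between the strict inequality defining $\varepsilon$-density and the non-strict one defining $\varepsilon$-separation, which is precisely what makes the maximality argument close.
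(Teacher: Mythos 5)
Your argument is correct and is precisely the standard Zorn's lemma argument that the paper alludes to (it gives no proof, only the remark that the lemma is "an easy consequence of Zorn's lemma" and a citation to \cite[Exercise~12.10]{H01}). Nothing to add.
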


\begin{prp}[{special case of \cite[Theorem~2.8]{N25}}]\label{main_technical_prp}
Let $n \in \N$, $R \geq 1$ be sufficiently large (depending on $n$), $\varepsilon > 0$ be sufficiently small (depending on $R$), $X$ be a compact connected smooth $n$-manifold with metrics $d_g$, $\rho$, and $d$, and $S \subset (X,d_g)$ be an $\varepsilon$-net. Suppose $d_g$ and $\rho$ are induced by Riemannian metrics on $X$, $d \leq \rho$ on $S \times S$, and there exist $L \geq 1$ and a homeomorphism $\eta_1: {[}0,\infty{)} \lra {[}0,\infty{)}$ such that
\begin{equation}\label{main_technical_ineq}
d(s,s') \geq L^{-1} \rho(s,s') \quad \forall \, s, s' \in S \quad  \textnormal{such that} \quad d_g(s,s') < 2\varepsilon,
\end{equation}
$d_g$ is $\eta_1$-quasisymmetric to $d$, and $d_g$ is $\eta_1$-quasisymmetric to $\rho$ on $B(s,R\varepsilon)$ for every $s \in S$.
Then, there exists a homeomorphism $\eta_2: {[}0,\infty{)} \lra {[}0,\infty{)}$, depending only on $R$, $L$, and $\eta_1$ such that $d_g$ is $\eta_2$-quasisymmetric to the glued metric $\wt{\rho}$ of Lemma~\ref{gluing_lmm} determined by $\rho$ and $d|_{S \times S}$.
\end{prp}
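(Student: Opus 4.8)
The plan is to deduce Proposition~\ref{main_technical_prp} from \cite[Theorem~2.8]{N25} by verifying that an $\varepsilon$-net, together with the hypotheses above, assembles into a $(K,L)$-approximation in the sense of \cite{N25} for a suitable $K$ depending only on $R$ and $n$. Roughly speaking, a $(K,L)$-approximation is a cover of $X$ by sets $U_s$ together with comparison metrics that agree with $d_g$ up to controlled quasisymmetric distortion on $K$-fold dilates and whose overlaps and chaining are uniformly bounded; the conclusion of \cite[Theorem~2.8]{N25} is precisely that the glued metric is quasisymmetric to $d_g$ with a distortion function controlled by the approximation constants. So the real content is a dictionary: an $\varepsilon$-net $S$ of $(X,d_g)$ yields the index set, the balls $U_s := B(s,R\varepsilon)$ yield the cover, and $d|_{S\times S}$ together with $\rho$ plays the role of the comparison data.

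First I would record the elementary covering and packing facts for an $\varepsilon$-net in a Riemannian manifold. Since $S$ is $\varepsilon$-dense, the balls $B(s,\varepsilon)$ already cover $X$, hence so do the $B(s,R\varepsilon)$; this gives the covering property. Since $S$ is $\varepsilon$-separated and $(X,d_g)$ has (for $\varepsilon$ small, by Lemma~\ref{local_control_lmm}) Euclidean-type volume growth at scales $\lesssim \varepsilon$, a standard packing argument bounds the number of $s'\in S$ with $B(s',R\varepsilon)\cap B(s,R\varepsilon)\neq\eset$, and more generally the number within any fixed multiple of $R\varepsilon$, by a constant $N=N(R,n)$. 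This is the bounded-overlap / bounded-degree hypothesis of a $(K,L)$-approximation. Next, Corollary~\ref{throw_crl} (applied at scale $\varepsilon$) supplies, for any two net points $s,s'$ with $d_g(s,s')<2\varepsilon$, an auxiliary point $t$ at scale comparable to $\varepsilon$ from both; combined with $\varepsilon$-density one can replace $t$ by a net point, so that the ``uniform thickness'' condition needed to turn the pointwise quasisymmetry estimates \eqref{main_technical_ineq} and the local $\eta_1$-quasisymmetry of $d_g$ to $\rho$ on $B(s,R\varepsilon)$ into the approximation's chaining estimates is met. The hypothesis $d\leq\rho$ on $S\times S$ is exactly the compatibility condition of Lemma~\ref{gluing_lmm} (and of \cite{N25}), so the glued metric $\wt\rho$ is defined and satisfies $\wt\rho\leq\rho$.

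The main step is then the chaining estimate: for $s,s'\in S$ with $d_g(s,s')$ comparable to $\varepsilon$ but not necessarily $<2\varepsilon$, one must compare $d(s,s')$ and $\rho(s,s')$ up to a factor depending only on $R,L,\eta_1$. Here I would chain through a bounded number (depending on $R,n$) of intermediate net points $s=s_0,s_1,\dots,s_m=s'$ with consecutive $d_g$-distances $<2\varepsilon$, all lying in a common ball $B(s,R'\varepsilon)$; on each link \eqref{main_technical_ineq} gives $d(s_i,s_{i+1})\geq L^{-1}\rho(s_i,s_{i+1})$, while the $\eta_1$-quasisymmetry of $d_g$ to both $d$ and $\rho$ on that ball, together with the uniform comparability of all the relevant $d_g$-distances to $\varepsilon$, upgrades the triangle-inequality chain into a two-sided comparison $d(s,s')\simeq_{C}\rho(s,s')$ with $C=C(R,L,\eta_1,n)$. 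Quasisymmetry (rather than bi-Lipschitz) is what makes this delicate: one cannot simply add up bi-Lipschitz estimates, so one must instead use the quasisymmetry inequality \eqref{quasisymmetry_inequality} with the auxiliary points from Corollary~\ref{throw_crl} to control ratios, exactly as in the standard ``quasisymmetry is quantitatively local'' arguments. I expect this chaining step — organizing the intermediate net points and bounding $m$ and $R'$ purely in terms of $R$ and $n$ — to be the main obstacle; everything else is bookkeeping.

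With the covering, bounded-overlap, thickness, and chaining properties in hand, $(S,\{B(s,R\varepsilon)\}_{s\in S},\rho,d|_{S\times S})$ is a $(K,L)$-approximation of $(X,d_g)$ with $K=K(R,n)$, and \cite[Theorem~2.8]{N25} yields a homeomorphism $\eta_2\colon[0,\infty)\lra[0,\infty)$, depending only on $K$, $L$, and $\eta_1$ — hence only on $R$, $L$, and $\eta_1$ — such that $d_g$ is $\eta_2$-quasisymmetric to the glued metric $\wt\rho$. Since $\wt\rho$ is exactly the glued metric of Lemma~\ref{gluing_lmm} determined by $\rho$ and $d|_{S\times S}$, this is the desired conclusion. $\qed$
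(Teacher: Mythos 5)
Your overall strategy --- turn the $\varepsilon$-net into a $(K,L)$-approximation via packing/volume bounds and then invoke \cite[Theorem~2.8]{N25} --- is exactly what the paper does (Lemma~\ref{net_to_approx_lmm} plus the closing remarks of Section~\ref{approx_sec}). The main discrepancy is in where you place the work. A $(K,L)$-approximation in the sense of \cite{N25} is a purely combinatorial-geometric structure on $(X,d_g)$ alone --- a graph on $S$, the points $\mathfrak{p}$, the radii $\mathfrak{r}$, and the cover $\mathcal{U}$ --- subject to conditions (A1)--(A4); it carries no ``comparison metrics.'' The paper takes $U_s = B(s,\varepsilon)$ and $\mathfrak{r}\equiv\varepsilon$ (so $L=1$), not $U_s = B(s,R\varepsilon)$; the parameter $R$ enters only at the very end, as $R = 2(2K+1)$, because Theorem~2.8 requires the local quasisymmetry of $\rho$ on the $(2K+1)$-stars and these are contained in $B(s,2(2K+1)\varepsilon)$. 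Consequently, the ``main step'' you flag --- a two-sided chaining comparison of $d$ and $\rho$ for net points at mutual distance comparable to, but larger than, $2\varepsilon$ --- is not a hypothesis you need to verify: the comparison is only assumed on adjacent net points (that is exactly \eqref{main_technical_ineq} together with $d\le\rho$ on $S\times S$), and the local-to-global upgrading is the content of Theorem~2.8 itself, which both you and the paper are using as a black box. As written, your proposal therefore contains an unproven step that you yourself identify as the main obstacle; but that step is superfluous for the reduction, and once it is removed the remaining verification --- covering by $\varepsilon$-density, the packing bound on the valence from Lemma~\ref{local_control_lmm}, condition (A4) via $\varepsilon$-density and connectedness of balls, and the bounded turning of $(X,d_g)$ from Lemma~\ref{Hopf_Rinow_lmm} --- is precisely the paper's argument.
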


The statement of \cite[Theorem~2.8]{N25} is much more general, as it applies to maps between metric spaces and with $(K,L)$-approximations instead of $\varepsilon$-nets. In the remainder of this section, we show that Proposition~\ref{main_technical_prp} is indeed a special case of \cite[Theorem~2.8]{N25}. Let $(X,d_g)$ be a Riemannian manifold.
For the ease of referencing and with the author's permission, the next paragraph is taken almost verbatim from \cite[Section~2.3]{N25}.\\

Given a graph $G=(V,\sim)$, we denote by $k(u,v)$ the combinatorial distance between vertices $u,v\in V$, i.e. the minimum number of edges in a chain connecting the two vertices. Note that $k(u,v)$ is understood to be $\infty$ if there is no chain of edges connecting $u$ and $v$. We consider quadruples $\mathcal A=(G,\mathfrak{p},\mathfrak{r}, \mathcal{U})$, where $G=(V,\sim)$ is a graph with vertex set $V$, $\mathfrak{p}: V\lra X$ and $\mathfrak{r}: V\lra (0,\infty)$ are maps, and $\mathcal U=\{\mathcal U(v): v\in V\}$ is an open cover of~$X$. We~let
$$p_v := \mathfrak{p}(v), \quad r_v := \mathfrak{r}(v), \quad \textnormal{and} \quad U_v := \mathcal U(v) \quad \forall\, v\in V.$$
For $K>0$, we define the \textit{$K$-star} of a vertex $v\in V$ with respect to $\mathcal A$ as
$$\mathcal A\text{-}\st_K(v) := \!\!\! \bigcup_{\begin{subarray}{c} u\in V\\ k(u,v) < K\end{subarray}} \!\!\! U_u$$
For $K,L\geq 1$, we call the quadruple $\mathcal A$ a \textit{$(K,L)$-approximation of $(X,d_g)$} if the following four conditions are satisfied. 
\begin{enumerate}[label=\normalfont{(A\arabic*)}]
	\item\label{a:1} Every vertex of $G$ has valence at most $K$.
	\item\label{a:2} $B(p_v,r_v)\subset U_v\subset B(p_v,Lr_v)$ for every $v\in V$.
	\item\label{a:3} Let $u,v\in V$. If $u\sim v$, then $U_u\cap U_v\neq \emptyset$ and $L^{-1}r_u\leq r_v\leq Lr_u$. Conversely, if $U_u\cap U_v\neq \emptyset$, then $k(u,v)<K$.
	\item\label{a:4} $N(U_v,{r_v/L})\subset \mathcal A\text{-}\st_K(v)$ for every $v\in V$.
\end{enumerate}
The $(K,L)$-approximation $\mathcal A$ of $X$ is called \textit{fine} if $U_v\neq X$ for all $v\in V$.\\

Given an $\varepsilon$-net $S \subset (X,d_g)$, we define a quadruple $\mathcal{A}_S$, as above, where $G$ is the graph with vertex set~$S$, with an edge connecting two vertices $s$ and $s'$ if $d(s,s') < 2\varepsilon$, $\mathfrak p$ is the inclusion of $S$ into $X$, $\mathfrak r$ is the constant $\varepsilon$, and $U_s := B(s,\varepsilon)$ for each $s \in S$.

\begin{lmm}\label{net_to_approx_lmm}
For each $n\in \N$, there is a $K \geq 1$ such that for each compact Riemannian $n$-manifold $(X,d_g)$, $\varepsilon > 0$ sufficiently small, and $\varepsilon$-net $S$, $\mathcal{A}_S$ is a $(K,1)$-approximation of $(X,d_g)$.
\end{lmm}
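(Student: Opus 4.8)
The strategy is to verify conditions \ref{a:1}--\ref{a:4} directly for $\mathcal{A}_S$ with $L=1$ and then set $K$ equal to the maximum of a few constants, each depending only on $n$. Note first that, being $\varepsilon$-separated in the compact manifold $X$, the net $S$ is finite. Condition \ref{a:2} is immediate since $U_s=B(s,\varepsilon)=B(p_s,r_s)$, and the radius half of \ref{a:3} is trivial because $r_v\equiv\varepsilon$. For the valence bound in \ref{a:1} I would note that the neighbors of a vertex $s$ form an $\varepsilon$-separated set inside $B(s,2\varepsilon)$; for $\varepsilon$ sufficiently small (depending on $X$), Lemma~\ref{local_control_lmm} identifies $B(s,2\varepsilon)$ — and also $B(s,4\varepsilon)$, which is needed below — with a Euclidean ball of radius comparable to $\varepsilon$ up to the bi-Lipschitz factor $2$, and a standard disjoint-small-balls count bounds the number of such neighbors by some $C_1(n)$.

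For \ref{a:3}: if $d_g(s,s')<2\varepsilon$, apply Hopf--Rinow (Lemma~\ref{Hopf_Rinow_lmm}) to obtain a minimizing geodesic between $s$ and $s'$; its midpoint lies within $d_g(s,s')/2<\varepsilon$ of each endpoint, hence in $B(s,\varepsilon)\cap B(s',\varepsilon)=U_s\cap U_{s'}$. Conversely, any point of $U_u\cap U_v$ forces $d_g(u,v)<2\varepsilon$ by the triangle inequality, so $u\sim v$ and $k(u,v)\le 1$.

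The substance of the lemma is \ref{a:4}. Since $\{B(s,\varepsilon)\}_{s\in S}$ covers $X$, every point of $N(U_v,r_v)$ lies within $2\varepsilon$ of $v$ and within $\varepsilon$ of some net point $u$, which is then within $3\varepsilon$ of $v$; so \ref{a:4} reduces to the claim that any two net points $u,v$ with $d_g(u,v)<3\varepsilon$ are joined by a chain of at most $K(n)$ edges. This is the main obstacle: the naive route of subdividing a minimizing geodesic from $u$ to $v$ and replacing subdivision points by nearby net points fails, because each appeal to $\varepsilon$-density contributes an additive error $\varepsilon$ and two such contributions already reach the edge threshold $2\varepsilon$, so consecutive chosen net points need not be adjacent; refining further does not help, as the relevant distance bound only converges to $2\varepsilon$ from above. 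Instead I would run a nerve argument along the geodesic. Let $\gamma\colon[0,\ell]\to X$ be a minimizing geodesic from $v$ to $u$ of length $\ell=d_g(u,v)<3\varepsilon$; it lies in $B(v,3\varepsilon)$. For each net point $s$ set $A_s:=\{t\in[0,\ell]:d_g(\gamma(t),s)=d_g(\gamma(t),S)\}$, a closed subset of $[0,\ell]$; since $S$ is finite these minima are attained and the $A_s$ cover $[0,\ell]$, and only net points within $\varepsilon$ of $\gamma$ — hence within $4\varepsilon$ of $v$, so at most $C_2(n)$ of them by the packing estimate — have $A_s\neq\emptyset$. If $A_s\cap A_{s'}\neq\emptyset$, a common parameter $t$ gives $d_g(\gamma(t),s)=d_g(\gamma(t),s')<\varepsilon$, whence $d_g(s,s')<2\varepsilon$ and $s\sim s'$ in $\mathcal{A}_S$. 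Because $[0,\ell]$ is connected and is covered by finitely many closed sets, the intersection graph of the nonempty $A_s$ is connected — a disconnection would exhibit $[0,\ell]$ as a disjoint union of two nonempty closed sets — and since $0\in A_v$ and $\ell\in A_u$, this graph contains a path from $v$ to $u$, which is a chain in $\mathcal{A}_S$ of length at most $C_2(n)-1$. Taking $K:=\max\{C_1(n),C_2(n),2\}$ then yields all four conditions with a constant depending only on $n$.
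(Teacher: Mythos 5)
Your proof is correct, and its overall strategy --- packing bounds obtained from Lemma~\ref{local_control_lmm} for \ref{a:1} and for the number of relevant net points, plus a connectivity (nerve-of-a-cover) argument for \ref{a:4} --- is the same as the paper's. The one point of divergence is the implementation of the connectivity step: you cover a minimizing geodesic from $v$ to $u$ by the closed Voronoi-type cells $A_s$ and use that a finite closed cover of a connected interval has connected intersection graph, whereas the paper applies the same principle directly to the connected set $B(v,2\varepsilon)$ with the open cover $\{U_{s'}: s'\in S\cap B(v,3\varepsilon)\}$, using that $U_{s'}\cap U_{s''}\neq\emptyset$ forces $s'\sim s''$. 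Both versions are valid and yield a chain whose length is controlled by the packing constant; the paper's is marginally shorter, since it needs neither the geodesic nor the Voronoi decomposition, but your route correctly identifies and sidesteps the pitfall (naive subdivision of a geodesic followed by snapping to the net need not produce adjacent net points) that makes some such connectivity argument necessary.
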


\begin{proof}
Conditions \ref{a:2} and \ref{a:3} with $L=1$ are clearly satisfied for any $K \geq 2$. By Lemma~\ref{local_control_lmm}, there are $c > 0$ and $M \geq 1$ such that for all $r \in (0,c)$ and $p \in X$ the $n$-volume of $B(p,r)$ is bounded below by $M^{-1}r^n$ and above by $Mr^n$.
Therefore, for every $\varepsilon > 0$ sufficiently small, at most $6^n M^2$ (resp. $8^n M^2$) disjoint balls of radius $\varepsilon/2$ can lie inside $B(s,3\varepsilon)$ (resp. $B(s, 4\varepsilon)$) for any $s \in S$.
By the $\varepsilon$-separation of $S$,
$$B(s',\varepsilon/2) \cap B(s'',\varepsilon/2) = \emptyset \quad \forall \, s', s'' \in S, \quad s' \neq s''.$$
Therefore, the cardinality of $S \cap B(s,2\varepsilon)$ is at most $6^n M^2$. Thus, condition~\ref{a:1} holds for any $K \geq 6^n M^2$.
By the $\varepsilon$-density condition,
\begin{equation}\label{net_to_approx_eqn}
N(U_s,r_u/L) = B(s,2\varepsilon) \subset \!\!\! \bigcup_{s' \in B(s,3\varepsilon)} \!\!\! U_{s'}.
\end{equation}
By the above, the cardinality of $S \cap B(s,3\varepsilon)$ is at most $8^n M^2$.
Since $B(s,2\varepsilon)$ is connected and $U_{s'} \cap U_{s''} \neq \emptyset$ whenever $s',s'' \in S$ are connected by an edge, any two vertices $s',s'' \in B(s,3\varepsilon)$ are connected in the graph through vertices lying in $B(s,3\varepsilon)$. Along with (\ref{net_to_approx_eqn}), this implies that condition~\ref{a:4} holds for any $K \geq 8^n M^2$.
\end{proof}

The statement of \cite[Theorem~2.8]{N25} uses $(2K+1)$-stars in $X$ determined by a fine $(K,L)$-approximation~$\mathcal{A}$ and the notion of \textit{$L$-bounded turning}. We do not define the latter notion, but note that it follows from Lemma~\ref{Hopf_Rinow_lmm} that compact Riemannian manifolds have $L$-bounded turning for every $L \geq 1$. Since $$\mathcal{A}_S\text{-}\st_{2K+1}(s) \subset B\big(s,2(2K+1)\varepsilon\big) \quad \forall \,s \in S,$$
we can take $R = 2(2K+1)$ for the purpose of applying \cite[Theorem~2.8]{N25} to obtain this proposition. 

\section{Smooth approximations}\label{main_sec}

This section contains the proofs of Theorems~\ref{quasi_thm} and \ref{Lipschitz_thm}. The idea for both proofs is to rescale the metric $d_g$ on $X$, then glue the rescaled metric with $d$ using Lemma~\ref{gluing_lmm}. For Theorem~\ref{Lipschitz_thm}, a global scaling works.

\begin{proof}[{\bf{\emph{Proof of Theorem~\ref{Lipschitz_thm}}}}]
By assumption, $d \leq L d_g$. Let $\varepsilon > 0$. By Lemma~\ref{net_lmm}, there is an $\varepsilon$-net $S$ in $(X,Ld_g)$. Let $\smoothmetric$ be the glued metric determined by $Ld_g$ and $d|_{S \times S}$. By Lemma~\ref{gluing_lmm}, $(X,\smoothmetric)$ is locally isometric to $(X,Ld_g)$ and $L$-bi-Lipschitz to $d_g$. The set $(S,d)$ is an $\varepsilon$-dense subset in both $(X,\smoothmetric)$ and $(X,d)$. Therefore, it is $\varepsilon$-close to both these spaces in the Gromov-Hausdorff distance and so these spaces are $2\varepsilon$-close to each other.
\end{proof}

For the rest of this section, we will adopt the following notation: $(X,d_g)$ is a connected Riemannian manifold and $d$ is another metric on $X$ such that $d_g$ is $\eta$-quasisymmetric to $d$, i.e.
\begin{equation}\label{particular_quasisymmetry_eqn}
d(p,q) \leq \eta \bigg(\frac{d_g(p,q)}{d_g(q,s)}\bigg)d(q,s) \quad \forall\, p,q \in X \quad \textnormal{with} \quad q \neq s
\end{equation}
The notations $B(p,r)$ and $\overline{B}(p,r)$ will only refer to balls taken with respect to $d_g$.\\

Quasisymmetries are more subtle than bi-Lipschitz homeomorphisms and a global scaling will not provide sufficient control to apply Proposition~\ref{main_technical_prp}. Therefore, we do a local rescaling. For $\varepsilon > 0$, define a function 
\begin{equation}\label{distortion_eqn}
\distortion: X \lra \R, \quad  \distortion(p) := \frac{\max_{q \in \overline{B}(p, \varepsilon)}d(p,q)}{\varepsilon}\,.
\end{equation}
We note that any $\varepsilon$-dense subset $S \subset (X,d_g)$ is $(\varepsilon \max_X \!\distortion)$-dense in $(X,d)$.
\begin{lmm}
The function $\distortion$ is continuous.
\end{lmm}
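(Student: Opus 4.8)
The plan is to deduce the continuity of $\distortion$ from two facts. The first is that $X$ is compact (as in Theorems~\ref{quasi_thm} and~\ref{Lipschitz_thm}): then $d$ is uniformly continuous on $X\times X$ — it is continuous there, the topologies induced by $d$ and $d_g$ coinciding since $d_g$ is quasisymmetric, hence homeomorphic, to $d$ — and each closed ball $\overline{B}(p,\varepsilon)$ is compact, so the maximum in~\eqref{distortion_eqn} is attained and $\distortion$ is well defined. The second is that $\overline{B}(p,\varepsilon)$ depends continuously on its center $p$ in the Hausdorff distance. The point the argument must address is that in~\eqref{distortion_eqn} one maximizes over a set that itself moves with $p$, so the continuity of $q\mapsto d(p,q)$ alone does not suffice.

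The main step is the Hausdorff estimate
$$d_H\bigl(\overline{B}(p,\varepsilon),\overline{B}(p',\varepsilon)\bigr)\le d_g(p,p')\qquad\text{for all }p,p'\in X,$$
where $d_H$ denotes Hausdorff distance with respect to $d_g$. Given $q\in\overline{B}(p',\varepsilon)$, one has $d_g(p,q)\le d_g(p,p')+\varepsilon$; by Lemma~\ref{Hopf_Rinow_lmm} there is a length-minimizing geodesic from $p$ to $q$, and if $q'$ is the point on it at distance $\min\{\varepsilon,d_g(p,q)\}$ from $p$, then $q'\in\overline{B}(p,\varepsilon)$ and $d_g(q,q')=\max\{0,d_g(p,q)-\varepsilon\}\le d_g(p,p')$. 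Hence $\overline{B}(p',\varepsilon)\subset N\bigl(\overline{B}(p,\varepsilon),r\bigr)$ for every $r>d_g(p,p')$, and the same inclusion with $p$ and $p'$ interchanged follows by symmetry, giving the displayed bound.

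With this in hand I would conclude as follows. Fix $\sigma>0$ and, by uniform continuity of $d$, choose $\delta>0$ so that $\lvert d(a,b)-d(a',b')\rvert<\sigma$ whenever $d_g(a,a')+d_g(b,b')<\delta$. If $d_g(p,p')<\delta/2$, pick $q\in\overline{B}(p,\varepsilon)$ with $d(p,q)=\varepsilon\distortion(p)$; the Hausdorff estimate yields $q'\in\overline{B}(p',\varepsilon)$ with $d_g(q,q')<\delta/2$, so $d(p',q')>d(p,q)-\sigma$ and therefore $\varepsilon\distortion(p')\ge d(p',q')>\varepsilon\distortion(p)-\sigma$. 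Interchanging $p$ and $p'$ gives the opposite inequality, so $\lvert\distortion(p)-\distortion(p')\rvert<\sigma/\varepsilon$ whenever $d_g(p,p')<\delta/2$, which proves the (uniform) continuity of $\distortion$. The only real obstacle is the Hausdorff estimate, and it is precisely the geodesic-truncation step above — the reason Lemma~\ref{Hopf_Rinow_lmm} enters — that handles the moving domain of maximization.
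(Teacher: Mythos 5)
Your proof is correct. It differs in structure from the paper's: the paper argues sequentially, establishing upper and lower semicontinuity separately --- for the upper bound it extracts a convergent subsequence of maximizers $q_i\in\overline{B}(p_i,\varepsilon)$ and notes the limit lies in $\overline{B}(p_\infty,\varepsilon)$, while for the lower bound it uses the length-space identity $N(\overline{B}(p,r_1),r_2)=B(p,r_1+r_2)$ to approximate the maximizer for $p_\infty$ by points of $\overline{B}(p_i,\varepsilon)$. Your version packages the geometric input symmetrically as the single Hausdorff estimate $d_H(\overline{B}(p,\varepsilon),\overline{B}(p',\varepsilon))\le d_g(p,p')$ (proved by the same geodesic-truncation idea underlying the paper's lower-bound step) and combines it with uniform continuity of $d$ on the compact $X\times X$; this handles both inequalities at once and yields the stronger conclusion that $\distortion$ is \emph{uniformly} continuous, with an explicit modulus, whereas the paper's compactness-of-maximizers argument avoids needing uniform continuity of $d$. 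Both arguments rely on the same underlying fact that quasisymmetry makes $\textnormal{id}:(X,d_g)\to(X,d)$ a homeomorphism, which you state and the paper uses implicitly. One cosmetic slip: your final display should read that $\lvert\distortion(p)-\distortion(p')\rvert<\sigma/\varepsilon$ follows from choosing $\delta$ for the tolerance $\sigma$, so to get tolerance $\sigma$ in $\distortion$ you should start from $\varepsilon\sigma$; since $\sigma$ is arbitrary and $\varepsilon$ is fixed, this does not affect the conclusion.
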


\begin{proof}
Let $(p_i)_{i \in \N}$ be a sequence of points in $X$ converging to $p_\infty$. We show that $\lim \distortion(p_i) = \distortion(p_\infty)$. For each $p_i$, let $q_i \in \overline{B}(p_i,\varepsilon)$ be a point realizing the maximum in (\ref{distortion_eqn}). By compactness, a subsequence of $(q_i)$ converges to a point $q_\infty$. It is clear that $q_\infty \in \overline{B}(p,\varepsilon)$ and that $d(p_\infty,q_\infty)/\varepsilon = \lim \distortion(p_i)$. Therefore, $\lim \distortion(p_i) \leq \distortion(p_\infty)$.\\

Conversely, let $s_\infty \in \overline{B}(p_\infty,\varepsilon)$ be a point realizing the maximum for $p_\infty$ in (\ref{distortion_eqn}). As $(X,d_g)$ is a length space, $$N(\overline{B}(p, r_1),r_2) = B(p,r_1+r_2) \quad \forall \, p \in X \quad \forall \,r_1,r_2 > 0.$$
As $p_i$ tend to $p_\infty$, it then follows that $s_\infty \in N(\overline{B}(p_i,\varepsilon),r_i)$ for $r_i$ tending to zero. Therefore, there exists a sequence $(s_i)_{i \in \N}$ converging to $s_\infty$ such that $s_i \in \overline{B}(p_i,\varepsilon)$. Thus, $\lim d(p_i,s_i)/\varepsilon = \distortion(p_\infty)$, and so $\lim \distortion(p_i) \geq \distortion(p_\infty)$.
\end{proof}

\begin{lmm}\label{sphere_bound_lmm}
For all $\varepsilon > 0$ and $p,q \in X$ such that $0 < d_g(p,q) \leq \varepsilon$,
$$\eta(\varepsilon/d_g(p,q))^{-1}\distortion(p)\varepsilon \leq d(p,q) \leq \distortion(p)\varepsilon.$$
\end{lmm}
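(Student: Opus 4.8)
The plan is to unpack the definition of $\distortion(p)$ and feed the quasisymmetry inequality \e_ref{particular_quasisymmetry_eqn} the right triple of points. The upper bound is essentially immediate: if $0 < d_g(p,q) \leq \varepsilon$, then $q \in \overline{B}(p,\varepsilon)$, so by the very definition \e_ref{distortion_eqn} of $\distortion$ we have $d(p,q) \leq \max_{q' \in \overline{B}(p,\varepsilon)} d(p,q') = \distortion(p)\varepsilon$. No further work is needed there.

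For the lower bound, let $q^\ast \in \overline{B}(p,\varepsilon)$ be a point realizing the maximum in \e_ref{distortion_eqn}, so that $d(p,q^\ast) = \distortion(p)\varepsilon$. First I would dispose of the degenerate case $q^\ast = p$, in which $\distortion(p) = 0$ and the claimed lower bound $0 \leq d(p,q)$ is trivial; so assume $q^\ast \neq p$. Now apply \e_ref{particular_quasisymmetry_eqn} with the roles ``$p$'' $= q^\ast$, ``$q$'' $= p$, ``$s$'' $= q$ (note $q \neq p$ since $d_g(p,q) > 0$), which gives
\[
d(q^\ast, p) \leq \eta\!\left(\frac{d_g(q^\ast,p)}{d_g(p,q)}\right) d(p,q).
\]
Since $d_g(q^\ast,p) \leq \varepsilon$ and $\eta$ is a homeomorphism of $[0,\infty)$, hence monotone increasing, we may bound the argument: $d_g(q^\ast,p)/d_g(p,q) \leq \varepsilon/d_g(p,q)$, so $\eta(d_g(q^\ast,p)/d_g(p,q)) \leq \eta(\varepsilon/d_g(p,q))$. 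Combining with $d(q^\ast,p) = \distortion(p)\varepsilon$ yields
\[
\distortion(p)\varepsilon \leq \eta\!\left(\frac{\varepsilon}{d_g(p,q)}\right) d(p,q),
\]
and rearranging gives the left-hand inequality $\eta(\varepsilon/d_g(p,q))^{-1}\distortion(p)\varepsilon \leq d(p,q)$.

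The only thing requiring a moment's care is the monotonicity step: I should note explicitly that a homeomorphism $\eta : [0,\infty) \to [0,\infty)$ is automatically strictly increasing, so that replacing the argument by a larger value only increases $\eta$. Beyond that, the whole lemma is a direct application of the definitions, so I do not anticipate any real obstacle — the ``hard part'' is merely choosing the correct substitution $(q^\ast, p, q)$ into \e_ref{particular_quasisymmetry_eqn} and handling the trivial case $q^\ast = p$ separately.
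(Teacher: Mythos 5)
Your proof is correct and is essentially identical to the paper's: the upper bound is read off from the definition of $\distortion$, and the lower bound comes from applying \eqref{particular_quasisymmetry_eqn} to the triple $(q^\ast, p, q)$ where $q^\ast$ realizes the maximum, followed by monotonicity of $\eta$. The extra care you take with the degenerate case and with justifying monotonicity is harmless but not needed.
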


\begin{proof}
Let $s \in \overline{B}(p,\varepsilon)$ be a point realizing the maximum for $p$ in (\ref{distortion_eqn}).
By (\ref{particular_quasisymmetry_eqn}),
$$\distortion(p) \varepsilon = d(p,s) \leq \eta(\varepsilon/d_g(p,q))d(p,q)$$
and the first inequality follows. The second inequality is immediate from (\ref{distortion_eqn}).
\end{proof}

In Lemmas~\ref{annular_bound_lmm}-\ref{main_lemma_on_metric} below, $\varepsilon > 0$ is assumed to be sufficiently small depending only on the Riemannian metric $g$ and a constant $R$.

\begin{lmm}\label{annular_bound_lmm}
For all $R\geq 1$, $\varepsilon > 0$ sufficiently small, and $p,q \in X$ with
$R^{-1}\varepsilon \leq d_g(p,q) \leq R\varepsilon$,
$$C^{-1}\distortion(p) \leq \distortion(q) \leq C \distortion(p),$$
where $C = \eta(1)\eta(R)^2$.
\end{lmm}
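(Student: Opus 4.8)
The plan is to compare both $\distortion(p)$ and $\distortion(q)$ to the single number $d(p,q)$, using that \eqref{particular_quasisymmetry_eqn} controls ratios of $d$-distances from a fixed base point by the corresponding ratios of $d_g$-distances. I will use that $\eta$ is increasing (a homeomorphism of $[0,\infty)$ onto itself is monotone and sends $0$ to $0$) and that $\eta(1)\ge 1$ (set $p=s$ in \eqref{particular_quasisymmetry_eqn}, so that $d(s,q)\le\eta(1)d(s,q)$). The one point to watch is that, when $R>1$, the hypothesis only gives $d_g(p,q)\le R\varepsilon$, so $d_g(p,q)$ may exceed $\varepsilon$ and Lemma~\ref{sphere_bound_lmm} does not apply to the pair $(p,q)$ itself.

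I would first prove $d(p,q)\ge\eta(R)^{-1}\distortion(p)\varepsilon$. Choose $s\in\overline B(p,\varepsilon)$ realizing the maximum in \eqref{distortion_eqn}, so $d(p,s)=\distortion(p)\varepsilon$ and $d_g(p,s)\le\varepsilon$. Since $d_g(p,q)\ge R^{-1}\varepsilon$, the ratio $d_g(p,s)/d_g(p,q)$ is at most $R$, and \eqref{particular_quasisymmetry_eqn} with base point $p$ applied to $s$ and $q$ gives $\distortion(p)\varepsilon=d(p,s)\le\eta\big(d_g(p,s)/d_g(p,q)\big)d(p,q)\le\eta(R)d(p,q)$. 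As the hypothesis is symmetric in $p$ and $q$, likewise $d(p,q)\ge\eta(R)^{-1}\distortion(q)\varepsilon$.

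I would then prove the matching bound $d(p,q)\le\eta(R)\distortion(p)\varepsilon$. To sidestep the issue above, I introduce a point at scale exactly $\varepsilon$: for $\varepsilon$ below a constant depending only on $(X,d_g)$ -- so that $\sup_{x}d_g(p,x)\ge\varepsilon$ for all $p$, which holds uniformly by compactness -- Lemma~\ref{Hopf_Rinow_lmm} yields a point $p'$ on a minimizing geodesic issuing from $p$ with $d_g(p,p')=\varepsilon$, so $p'\in\overline B(p,\varepsilon)$ and hence $d(p,p')\le\distortion(p)\varepsilon$. Since $d_g(p,q)\le R\varepsilon$, \eqref{particular_quasisymmetry_eqn} with base point $p$ applied to $q$ and $p'$ gives $d(p,q)\le\eta\big(d_g(p,q)/d_g(p,p')\big)d(p,p')=\eta\big(d_g(p,q)/\varepsilon\big)d(p,p')\le\eta(R)\distortion(p)\varepsilon$, and symmetrically $d(p,q)\le\eta(R)\distortion(q)\varepsilon$.

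Combining, $\distortion(q)\varepsilon\le\eta(R)d(p,q)\le\eta(R)^2\distortion(p)\varepsilon$, and interchanging $p$ and $q$, $\distortion(p)\varepsilon\le\eta(R)^2\distortion(q)\varepsilon$; since $\eta(R)^2\le\eta(1)\eta(R)^2=C$, this is the asserted inequality (in fact with the marginally better constant $\eta(R)^2$). The main obstacle is exactly the one flagged at the outset -- that $d_g(p,q)$ need not be $\le\varepsilon$, so one cannot simply feed the pair $(p,q)$ into Lemma~\ref{sphere_bound_lmm} -- and it is handled by routing the upper estimate through the auxiliary point $p'$ at scale $\varepsilon$; everything else is a direct substitution into \eqref{particular_quasisymmetry_eqn}.
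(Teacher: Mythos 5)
Your proof is correct and follows essentially the same route as the paper: both introduce an auxiliary point at $d_g$-distance exactly $\varepsilon$ and chain the quasisymmetry inequality \eqref{particular_quasisymmetry_eqn} through the pair $(p,q)$, using the maximizer in \eqref{distortion_eqn} to convert $d$-distances into values of $\distortion$. The only difference is bookkeeping: you compare both $\distortion(p)$ and $\distortion(q)$ to $d(p,q)$ directly (rather than comparing $d(p,p')$ with $d(q,q')$ and then invoking Lemma~\ref{sphere_bound_lmm}), which legitimately saves the factor $\eta(1)$ and gives the slightly sharper constant $\eta(R)^2\le C$.
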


\begin{proof}
By Lemma~\ref{local_control_lmm}, there exist $p', q' \in X$ such that
$$d_g(p,p') = d_g(q,q') = \varepsilon.$$
Therefore, $d_g(p,p') \leq Rd_g(p,q)$ and $d_g(p,q) \leq Rd_g(q,q')$. Along with (\ref{particular_quasisymmetry_eqn}), these inequalities give
$$d(p,p') \leq \eta\bigg(\frac{d_g(p,p')}{d_g(p,q)}\bigg)d(p,q) \leq \eta(R)\eta\bigg(\frac{d_g(p,q)}{d_g(q,q')}\bigg)d(q,q') \leq \eta(R)^2 d(q,q').$$
Combining this with Lemma~\ref{sphere_bound_lmm} with $(p,q)$ replaced by $(p,p')$, we obtain
$$\distortion(p) \leq \eta(1)\frac{d(p,p')}{\varepsilon} \leq \eta(1)\eta(R)^2 \frac{d(q,q')}{\varepsilon} \leq \eta(1)\eta(R)^2\distortion(q).$$
This yields the first claimed inequality; the second follows by symmetry.
\end{proof}

Lemma~\ref{annular_bound_lmm} bounds $\distortion$ on a spherical shell around a point. It is more convenient to bound $\distortion$ on a ball. This is the content of Lemma~\ref{ball_bound_lmm} below. It follows from Lemma~\ref{annular_bound_lmm} by placing the ball inside a spherical shell around another point.

\begin{lmm}\label{ball_bound_lmm}
For all $R \geq 1$, $\varepsilon > 0$ sufficiently small, and $p, q \in X$ with
$d_g(p,q) < R\varepsilon$,
$$C^{-1}\distortion(p) \leq \distortion(q) \leq C\distortion(p),$$
where $C = \eta(1)^2\eta(8R)^4$.
\end{lmm}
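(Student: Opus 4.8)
The plan is to reduce Lemma~\ref{ball_bound_lmm} to Lemma~\ref{annular_bound_lmm} by introducing an auxiliary point whose distance-$\varepsilon$ sphere contains both $p$ and $q$. Concretely, I would first apply Corollary~\ref{throw_crl} (with $r$ comparable to $R\varepsilon$, after checking the smallness hypothesis on $r$ holds once $\varepsilon$ is small enough depending on $R$) to the pair $p,q$ satisfying $d_g(p,q) < R\varepsilon$. This produces a ``pivot'' point $t \in X$ with $8^{-1}R\varepsilon \leq d_g(p,t) \leq 8R\varepsilon$ and $8^{-1}R\varepsilon \leq d_g(q,t) \leq 8R\varepsilon$. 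In other words, both $p$ and $q$ lie in the spherical shell of inner radius $8^{-1}R\varepsilon$ and outer radius $8R\varepsilon$ around $t$.

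Next I would invoke Lemma~\ref{annular_bound_lmm} twice, with the role of its ``$p$'' played by $t$ and with ``$R$'' replaced by $8R$ (so that $(8R)^{-1}\varepsilon \leq 8^{-1}R\varepsilon \leq d_g(t,p) \leq 8R\varepsilon$, and similarly for $q$; note $8^{-1}R \geq (8R)^{-1}$ since $R \geq 1$). The first application gives $C_0^{-1}\distortion(t) \leq \distortion(p) \leq C_0\distortion(t)$ and the second gives $C_0^{-1}\distortion(t) \leq \distortion(q) \leq C_0\distortion(t)$, where $C_0 = \eta(1)\eta(8R)^2$. Chaining these two comparisons through $\distortion(t)$ yields $C_0^{-2}\distortion(p) \leq \distortion(q) \leq C_0^2\distortion(p)$, and since $C_0^2 = \eta(1)^2\eta(8R)^4 = C$, this is exactly the claimed bound.

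The only points requiring care are bookkeeping ones: verifying that the ``sufficiently small $\varepsilon$'' in Lemma~\ref{annular_bound_lmm} and the ``sufficiently small $r$'' in Corollary~\ref{throw_crl} can be met simultaneously — here one simply takes $\varepsilon$ small enough (depending on $R$) that $8R\varepsilon$ falls below the thresholds in both results — and checking the monotonicity of $\eta$ so that enlarging the radius parameter from $R$ to $8R$ only weakens the constant. I do not expect a genuine obstacle; the content is entirely in Lemmas~\ref{annular_bound_lmm} and the geometric input of Corollary~\ref{throw_crl}, and this lemma is a purely formal ``shell-to-ball'' upgrade. The mild subtlety, if any, is making sure the shell around $t$ is genuinely nondegenerate, i.e.\ that $8^{-1}R\varepsilon > 0$ and that the inner radius does not exceed the outer one, both of which are immediate from $R \geq 1$.
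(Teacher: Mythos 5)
Your proposal is correct and follows essentially the same route as the paper: apply Corollary~\ref{throw_crl} with $r = R\varepsilon$ to the pair $p,q$ to obtain a pivot point, then apply Lemma~\ref{annular_bound_lmm} (with its radius parameter enlarged to $8R$) twice and chain the two comparisons. The bookkeeping you flag, namely that $8^{-1}R\varepsilon \geq (8R)^{-1}\varepsilon$ and that $\bigl(\eta(1)\eta(8R)^2\bigr)^2 = C$, is exactly what makes the paper's stated constant come out.
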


\begin{proof}
By Corollary~\ref{throw_crl} with $r = R\varepsilon$, there exists a point $s \in X$ such that
$$8^{-1}R^{-1} \varepsilon \leq d_g(p,s) \leq 8R\varepsilon \quad \textnormal{and} \quad 8^{-1}R^{-1} \varepsilon \leq d_g(q,s) \leq 8R\varepsilon.$$
Applying Lemma~\ref{annular_bound_lmm} to $(p,s)$ and then to $(s,q)$ yields the claimed inequalities.
\end{proof}

Define the continuous Riemannian metric
$g_\varepsilon := \distortion^2 g$
and let $\continuousmetric$ be the induced metric on $X$. 

\begin{lmm}\label{main_lemma_on_metric}
For all $R \geq 1$, $\varepsilon > 0$ sufficiently small, and $p \in X$, $\continuousmetric$ is $C$-bi-Lipschitz to $\distortion(p)d_g$ on $B(p, R\varepsilon)$, where $C =\eta(1)^2\eta(16R)^4$.
\end{lmm}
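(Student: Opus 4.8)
The plan is to compare lengths of curves. Recall that $\continuousmetric$ is the length metric induced by the continuous Riemannian metric $g_\varepsilon = \distortion^2 g$, and $\distortion(p)d_g$ is the length metric induced by the constant-coefficient metric $\distortion(p)^2 g$. So if $\gamma$ is any rectifiable curve contained in $B(p,R\varepsilon)$, its $g_\varepsilon$-length is $\int \distortion(\gamma(t))\,|\dot\gamma(t)|_g\,dt$, while its $\distortion(p)d_g$-length is $\distortion(p)\int |\dot\gamma(t)|_g\,dt$. By Lemma~\ref{ball_bound_lmm} applied with the given $R$, every point $x \in B(p,R\varepsilon)$ satisfies $C_0^{-1}\distortion(p) \le \distortion(x) \le C_0\distortion(p)$ with $C_0 = \eta(1)^2\eta(8R)^4$, so the two length integrands are pointwise comparable up to the factor $C_0$, whence the two length functionals are comparable up to $C_0$ on curves that stay inside $B(p,R\varepsilon)$.

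First I would make this precise: for curves $\gamma$ with image in $B(p,R\varepsilon)$, one gets $C_0^{-1}\,\ell_{\distortion(p)d_g}(\gamma) \le \ell_{g_\varepsilon}(\gamma) \le C_0\,\ell_{\distortion(p)d_g}(\gamma)$. Taking infima over all curves joining two points $x,y \in B(p,R\varepsilon)$ that remain in $B(p,R\varepsilon)$ gives the same comparison for the \emph{intrinsic} distances of the ball. The subtlety, and what I expect to be the main obstacle, is that the metrics $\continuousmetric$, $d_g$, and $\distortion(p)d_g$ in the statement are the ambient distances on all of $X$, which are infima over \emph{all} curves, not just those trapped in $B(p,R\varepsilon)$; so I need a containment argument ensuring that near-minimizing curves between two points of, say, $B(p,R\varepsilon/2)$ or a slightly smaller ball do not escape $B(p,R\varepsilon)$. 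This is a standard point: a curve leaving $B(p,R\varepsilon)$ and returning has $d_g$-length at least (roughly) $R\varepsilon$, while by Lemma~\ref{Hopf_Rinow_lmm} the two endpoints are joined by a $d_g$-geodesic of length $< R\varepsilon$ lying in $B(p,R\varepsilon)$; since $\distortion$ is comparable to $\distortion(p)$ throughout a still-larger ball (apply Lemma~\ref{ball_bound_lmm} with $16R$ in place of $R$, which is exactly why the constant in the statement is $\eta(1)^2\eta(16R)^4$ rather than $\eta(1)^2\eta(8R)^4$), this shorter curve also has smaller $g_\varepsilon$-length up to a bounded factor, so minimizing curves can be taken inside the larger ball and the comparison survives.

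So the steps, in order, are: (1) for "$\varepsilon$ sufficiently small" invoke Lemma~\ref{local_control_lmm} (or just the length-space property already used above) so that balls behave well and geodesics exist; (2) apply Lemma~\ref{ball_bound_lmm} with radius parameter $16R$ to get $C_1^{-1}\distortion(p) \le \distortion(x) \le C_1 \distortion(p)$ for all $x \in B(p,16R\varepsilon)$, where $C_1 = \eta(1)^2\eta(16R)^4$; (3) show that for $x,y \in B(p,R\varepsilon)$ the $\distortion(p)d_g$-distance and the $\continuousmetric$-distance are each realized (up to an arbitrarily small error, or exactly via Hopf--Rinow) by curves staying inside $B(p,16R\varepsilon)$ — for $d_g$-geodesics this is immediate since $d_g(x,y) < 2R\varepsilon$ forces the geodesic into $B(x,2R\varepsilon) \subset B(p,3R\varepsilon)$, and for $g_\varepsilon$ one argues that any competitor curve exiting $B(p,16R\varepsilon)$ has $g_\varepsilon$-length $\gtrsim \distortion(p)\cdot R\varepsilon$, larger than the $g_\varepsilon$-length of the in-ball geodesic, so it is not near-minimizing; (4) on such curves, the pointwise bound from step (2) gives $C_1^{-1}\ell_{\distortion(p)d_g}(\gamma) \le \ell_{g_\varepsilon}(\gamma) \le C_1 \ell_{\distortion(p)d_g}(\gamma)$; (5) take infima to conclude $C_1^{-1}\distortion(p)d_g(x,y) \le \continuousmetric(x,y) \le C_1 \distortion(p)d_g(x,y)$ for all $x,y \in B(p,R\varepsilon)$, which is the claim with $C = C_1 = \eta(1)^2\eta(16R)^4$. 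The only real work is the containment argument in step (3); everything else is bookkeeping on length integrals.
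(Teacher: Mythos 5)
Your overall strategy---pointwise comparison of the length integrands via Lemma~\ref{ball_bound_lmm}, followed by a containment argument for near-minimizing curves---is the same as the paper's, but two steps do not hold up as written. First, the constant: Lemma~\ref{ball_bound_lmm} applied with $16R$ in place of $R$ gives $\distortion(x)\asymp\distortion(p)$ on $B(p,16R\varepsilon)$ with constant $\eta(1)^2\eta(8\cdot 16R)^4=\eta(1)^2\eta(128R)^4$, not $\eta(1)^2\eta(16R)^4$. The stated constant arises from applying the lemma with $2R$ in place of $R$, i.e.\ from working on $B(p,2R\varepsilon)$; the paper does exactly this, and gets the upper bound by invoking the convexity of $B(p,R\varepsilon)$ from Lemma~\ref{local_control_lmm}, so that the $d_g$-geodesic between $q,s\in B(p,R\varepsilon)$ stays in $B(p,R\varepsilon)$ itself rather than merely in $B(q,2R\varepsilon)\subset B(p,3R\varepsilon)$ as in your step (3).

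Second, and more seriously, your exclusion of escaping competitors is unjustified. You claim a curve exiting $B(p,16R\varepsilon)$ has $g_\varepsilon$-length ``larger than the $g_\varepsilon$-length of the in-ball geodesic, so it is not near-minimizing.'' The escaping curve has $g_\varepsilon$-length at least $C_1^{-1}\distortion(p)\cdot 15R\varepsilon$, while the in-ball geodesic has $g_\varepsilon$-length up to $C_1\distortion(p)\cdot 2R\varepsilon$; the former exceeds the latter only if $C_1^2<15/2$, which fails once $\eta$ or $R$ is large, and enlarging the ball does not help because the comparison constant grows with the ball. The correct observation---and this is how the paper argues---is that you do not need the minimizer to stay inside any ball: if $\gamma$ exits $B(p,2R\varepsilon)$, its two sub-segments joining $q$ and $s$ to the boundary of that ball each have $d_g$-length at least $R\varepsilon$, hence $\gamma$ has $g_\varepsilon$-length at least $2C^{-1}\distortion(p)R\varepsilon\ge C^{-1}\distortion(p)d_g(q,s)$ since $d_g(q,s)\le 2R\varepsilon$, so the desired lower bound holds directly in the escaping case. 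With these two repairs your argument coincides with the paper's proof.
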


\begin{proof}
For a rectifiable curve $\gamma$ in $X$, let $|\gamma|$ and $|\gamma|_\varepsilon$ be its lengths with respect to $g$ and $g_\varepsilon$, respectively. By Lemma~\ref{ball_bound_lmm}, for each $\varepsilon > 0$ sufficiently small, $p \in X$, and rectifiable curve $\gamma$ in $B(p,2R\varepsilon)$,
\begin{equation}\label{length_Lipschitz)inequality}
C^{-1} \distortion(p)|\gamma| \leq |\gamma|_\varepsilon \leq C \distortion(p)|\gamma|.
\end{equation}
By Lemma~\ref{local_control_lmm} with $r = R\varepsilon$, for each $\varepsilon > 0$ sufficiently small and $p\in X$, $B(p,R\varepsilon)$ is convex with respect to $g$. Thus, the $d_g$-distance between points in each ball is realized by a rectifiable curve lying inside the ball.
Let $q,s \in B(p,R\varepsilon)$. The distance $\continuousmetric(q,s)$ equals the minimum $\continuousmetric$-length of a rectifiable curve $\gamma$ connecting $q$ to $s$ and thus
$$\continuousmetric(q,s) \leq C\distortion(p)d_g(q,s)$$
by the $g$-convexity of $B(p,R\varepsilon)$ and (\ref{length_Lipschitz)inequality}).
If this curve lies inside $B(p,2R\varepsilon)$, then it follows from (\ref{length_Lipschitz)inequality})
that
\begin{equation}\label{length_lip_2}
C^{-1}\distortion(p)d_g(q,s) \leq \continuousmetric(q,s).
\end{equation}
If $\gamma$ does not lie inside $B(p,2R\varepsilon)$, then it contains segments connecting the boundary of $B(p,2R\varepsilon)$ to $q$ and to $s$. By (\ref{length_Lipschitz)inequality}) and the fact that $q,s \in B(p,R\varepsilon)$, each of these segments has $\continuousmetric$-length at least $C^{-1}\distortion(p)R\varepsilon$, so $\continuousmetric(q,s) \geq 2C^{-1}\distortion(p)R\varepsilon$. Lastly, as the diameter of $B(p,R\varepsilon)$ with respect to $g$ is at most $2R\varepsilon$, then $d_g(q,s) \leq 2R\varepsilon$, so (\ref{length_lip_2}) still holds and the result follows.
\end{proof}

\begin{proof}[{\bf{\emph{Proof of Theorem~\ref{quasi_thm}}}}]
Let $R\geq1$ be sufficiently large (depending only on the dimension $n$ of $X$) and $\varepsilon > 0$, as in Proposition~\ref{main_technical_prp} in both cases. Define $C := \eta(1)^2\eta(16R)^4$. Let $S_\varepsilon \subset (X,d_g)$ be an $\varepsilon/2$-net.\\

By \cite[Theorem~4.45]{F99}, every continuous function on a smooth manifold is the uniform limit of a sequence of smooth functions. Let $\wt{\distortion}: X \lra (0,\infty)$ be a smooth function such that
\begin{equation}\label{smoothing_of_dist_bound}
2^{-1}\distortion(p) \leq \wt{\distortion}(p) \leq 2\distortion(p) \quad \forall \, p \in X.
\end{equation}
Let $\smoothmetric$ be the metric on $X$ induced by the Riemannian metric $h_\varepsilon := (4C\wt{\distortion})^2 g.$ We show below first that $d \leq \smoothmetric$ on $S_\varepsilon \times S_\varepsilon$ for all $\varepsilon > 0$ sufficiently small. Thus, the glued metric $\wt{\smoothmetric}$ of Lemma~\ref{gluing_lmm} determined by $\smoothmetric$ and $d_{S_\varepsilon \times S_\varepsilon}$ is well-defined and is locally isometric to $\smoothmetric$. We then use Proposition~\ref{main_technical_prp} to show that the metrics $\wt{\smoothmetric}$ are uniformly quasisymmetric to $d_g$. As the last step of the proof, we note that these metrics converge to $d$ in the Gromov-Hausdorff sense as $\varepsilon$ tends to zero.\\

\textit{Quasisymmetry condition.}
We now verify that the assumptions of Proposition~\ref{main_technical_prp} with $\varepsilon$ replaced by $\varepsilon/2$ are satisfied by $d_g$, $\smoothmetric$, and $d$. By assumption, $d_g$ is $\eta_1$-quasisymmetric to $d$ for any homeomorphism $\eta_1: {[}0,\infty{)} \lra {[}0,\infty{)}$ such that $\eta_1 \geq \eta$.
Let $s \in S_\varepsilon$. By Lemma~\ref{main_lemma_on_metric}, $\distortion(s)d_g$ is $C$-bi-Lipschitz to $\continuousmetric$ on $B(s,R\varepsilon)$. Thus, $\distortion(s)d_g$ is $8C^2$-bi-Lipschitz to $\smoothmetric$ on $B(s,R\varepsilon)$ by (\ref{smoothing_of_dist_bound}). Therefore, by Lemma~\ref{bi-Lip_are_QS_lmm}, $\distortion(s)d_g$ and $d_g$ are $\eta_1$-quasisymmetric to $\smoothmetric$ on $B(s,R\varepsilon)$ for any homeomorphism $\eta_1: {[}0,\infty{)} \lra {[}0,\infty{)}$ such that $\eta_1(t) \geq (8C^2)^2t$.\\

Let $s,s' \in X$ satisfy $\varepsilon/2 \leq d_g(s,s') < \varepsilon$. Thus, $\eta(\varepsilon/d_g(s,s')) \leq \eta(2)$.
By Lemma~\ref{sphere_bound_lmm} combined with the bounds on $d_g(s,s')$,
$$\eta(2)^{-1}\distortion(s)d_g(s,s') \leq \eta(2)^{-1} \distortion(s)\varepsilon \leq d(s,s') \leq \distortion(s)\varepsilon \leq 2\distortion(s)d_g(s,s').$$
Along with Lemma~\ref{main_lemma_on_metric}, this gives
$$C^{-1}\eta(2)^{-1}\continuousmetric(s,s') \leq d(s,s') \leq 2C\continuousmetric(s,s').$$
As $\smoothmetric/4C$ is $2$-bi-Lipschitz to $\continuousmetric$, $4^{-1}\smoothmetric(s,s') \leq 2C\continuousmetric(s,s') \leq \smoothmetric(s,s')$. Therefore,
\begin{equation}\label{short_on_adj_points_inequality}
\big(8C^2 \eta(2)\big)^{-1}\smoothmetric(s,s') \leq d(s,s') \leq \smoothmetric(s,s').
\end{equation}
By the first inequality in (\ref{short_on_adj_points_inequality}), (\ref{main_technical_ineq}) holds with $\varepsilon$ replaced by $\varepsilon/2$ for any $L \geq \max \{1, 8C^2 \eta(2)\}$.\\

Suppose now $p,q \in X$ satisfy $d_g(p,q) \geq \varepsilon/2$. Let $\gamma$ be a length-minimizing geodesic with respect to $h_\varepsilon$ connecting $p$ to $q$, as provided by Lemma~\ref{Hopf_Rinow_lmm}. Let $s_0,s_1,\dots s_l$ be a string of points on $\gamma$ such that $s_0 = p$, $s_l = q$, and $\varepsilon/2 \leq d_g(s_i,s_{i+1}) < \varepsilon$ for $0 \leq i \leq l-1$; such a string can be generated by iteratively taking midpoints\footnote{A continuity argument shows that every continuous path contains a point equidistant from its endpoints.}. Then, by the triangle inequality, the second inequality in (\ref{short_on_adj_points_inequality}), and the fact that $\gamma$ is length-minimizing,
$$d(p,q) \leq \sum_0^{l-1}d(s_i,s_{i+1}) \leq \sum_0^{l-1}\smoothmetric(s_i,s_{i+1}) = \smoothmetric(p,q).$$
Therefore, $d \leq \smoothmetric$ on $S_\varepsilon \times S_\varepsilon$, as claimed. By Proposition~\ref{main_technical_prp}, there exists a homeomorphism

$\eta_2: {[}0,\infty{)} \lra {[}0,\infty{)},$ depending only on $n$ and $\eta$, such that $d_g$ is $\eta_2$-quasisymmetric to $\wt{\smoothmetric}$.\\

\textit{Gromov-Hausdorff convergence.}
Let $\mu_\varepsilon\! :=\! \max \{C,1\}(\max_X\!\distortion).$ The set $S_\varepsilon$ is an $\mu_\varepsilon \varepsilon$-dense subset of $(X,d)$ and $(X,\wt{\smoothmetric})$. Therefore, it is $\mu_\varepsilon \varepsilon$-close in the Gromov-Hausdorff distance to each of these spaces, so they are $2\mu_\varepsilon \varepsilon$-close to each other. It therefore suffices to show that $2\mu_\varepsilon \varepsilon$ tends to zero as $\varepsilon$ tends to zero. By Lemma~\ref{Holder_lmm}, there are $C' \geq 1$ and $0 < \alpha \leq 1$ such that $\varepsilon\distortion(p) \leq C'\varepsilon^{\alpha}$ for every point $p \in X$. The claim follows.
\end{proof}

\begin{rmk}\label{remove_locally_rmk}
If the metric space $(X,d)$ in Theorem~\ref{quasi_thm} is a length space, then the Riemannian manifolds $(X,\smoothmetric)$ in the proof of Theorem~\ref{quasi_thm} are uniformly quasisymmetric to $(X,d_g)$ and tend to $(X,d)$ in the Gromov-Hausdorff sense. This follows from the fact that the metrics $\smoothmetric$ are then uniformly $(8C^2\eta(2))$-bi-Lipschitz to $\wt{\smoothmetric}$, which we now prove. By Lemma~\ref{gluing_lmm}, it suffices to show that $(8C^2\eta(2))d \geq \smoothmetric$ on $S_\varepsilon \times S_\varepsilon$. This follows from essentially the same argument as used in the proof of Theorem~\ref{quasi_thm} to show that $d \leq \smoothmetric$ on $S_\varepsilon \times S_\varepsilon$. Specifically, let $p,q \in X$ 
satisfy $d_g(p,q) \geq \varepsilon/2$. As $d$ is a length metric, for every $\varepsilon'>0$, there exists a string of points $s_0,s_1,\dots,s_l \in X$ such that
$s_0 = p$, $s_l = q$,
$$\varepsilon/2 \leq d_g(s_i,s_{i+1}) < \varepsilon ~~\textnormal{for}~~ 0\leq i \leq l-1 \quad \textnormal{and} \quad
d(p,q) + \varepsilon' \geq \sum_0^{l-1} d(s_i,s_{i+1}).$$
By the first inequality in (\ref{short_on_adj_points_inequality}), the triangle inequality, and the fact that $\varepsilon'$ is arbitrary, it follows that $(8C^2\eta(2))d(p,q) \geq \smoothmetric(p,q)$, as claimed.
\end{rmk}

\begin{rmk}\label{lack_of_compactness_rmk}
The compactness of $X$ is needed at two points in the proof of Theorem~\ref{quasi_thm}. It is used in Lemma~\ref{Holder_lmm}, which is applied in the proof to show the Gromov-Hausdorff convergence of the constructed sequence. If $X$ were non-compact, one could instead consider pointed Gromov-Hausdorff convergence. Compactness is also used in Lemma~\ref{local_control_lmm}. If $X$ were non-compact, this could be circumvented by assuming additionally that the Riemannian manifold $(X,d_g)$ has uniformly bounded geometry. This is a reasonable condition in practice.
\end{rmk}

\vspace{.3in}

{\it Department of Mathematics, Stony Brook University, Stony Brook, NY 11794\\
spencer.cattalani@stonybrook.edu}

\clearpage


\begin{thebibliography}{9}

\bibitem{B03} M.~Berger,
{\it A Panoramic View of Riemannian Geometry},
Springer-Verlag, 2003

\bibitem{C92} M.~Cassorla,
{\it Approximating compact inner metric spaces by surfaces},
Indiana Univ.~Math.~J.~41 (1992), no. 2, 505--513

\bibitem{CC97} J.~Cheeger and  T.~H.~Colding,
{\it On the structure of spaces with Ricci curvature bounded below.~{I}},
J.~Differential Geom.~46 (1997), no. 3, 406--480.

\bibitem{D24} T.~Dott
{\it Gromov-Hausdorff limits of closed surfaces},
Anal.~Geom.~Metr.~Spaces 12 (2024), no. 1, Paper No. 20240003, 16 pp


\bibitem{FO95} S.~C.~Ferry and B.~L.~Okun,
{\it Approximating topological metrics by Riemannian metrics},
Proc.~Amer.~Math.~Soc.~123 (1995), no. 6, 1865--1872

\bibitem{F99} G.~Folland,
{\it Real Analysis},
2nd Ed., Wiley, 1999

\bibitem{H01} J.~Heinonen,
{\it Lectures on Analysis on Metric Spaces},
Universitext, Springer-Verlag, 2001

\bibitem{HK98} J.~Heinonen and P.~Koskela,
{\it Quasiconformal maps in metric spaces with controlled geometry},
Acta Math.~181 (1998), no. 1, 1--61


\bibitem{N25} D.~Ntalampekos,
{\it Characterization of quasispheres via smooth approximation},
arXiv:2502.10006 (2025)

\bibitem{NR24} D.~Ntalampekos and M.~Romney,
{\it Polyhedral approximation and uniformization for non-length surfaces},
 J.~Eur.~Math.~Soc.~(2024), published online first

\bibitem{NR23} D.~Ntalampekos and M.~Romney,
{\it Polyhedral approximation of metric surfaces and applications to uniformization},
Duke Math.~J.~172 (2023), no.~9, 1673--1734

\bibitem{N25b} D.~Ntalampekos,
{\it Uniformization of metric surfaces: a survey},
arXiv:2505.02697 (2025)

\end{thebibliography}
\end{document}